\renewcommand{\paragraph}{\@startsection{paragraph}{4}{0ex}%
   {-3.25ex plus -1ex minus -0.2ex}%
   {1.5ex plus 0.2ex}%
   {\normalfont\normalsize\bfseries}}
\providecommand{\U}[1]{\protect\rule{.1in}{.1in}}
\newtheorem{thm}{Theorem}
\newtheorem{prop}{Proposition}
\newtheorem{rmk}{Remark}
\newtheorem{lem}{Lemma}
\newenvironment{proof}[1][Proof]
{\noindent\textbf{#1:} }{\hfill\rule{0.5em}{0.5em}}
\newcommand{{\resizebox{}{!}{\input .pstex_t}}}[2]{{\resizebox{#1}{!}{\input #2.pstex_t}}}
\def \Fin {\hfill\rule{0.5em}{0.5em}}
\newcommand{\R}{\mathbb{R}}
\newcommand{\dis}{\displaystyle}
\title{\textbf{On Some Geometric Inverse Problems for Nonscalar Elliptic Systems}}
\author{
\textsc{Raul K.C. Ara\'ujo}\thanks{Department of Mathematics, Federal University of Pernambuco, UFPE, CEP 50740-545, Recife,
PE, Brazil. E-mail: {\tt  raul@dmat.ufpe.br}. Partially supported by CNPq (Brazil).}
\quad
\and
	\textsc{Enrique Fern\' andez-Cara}\thanks{University of Sevilla, Dpto. E.D.A.N, Aptdo 1160, 41080 Sevilla, Spain. E-mail: {\tt cara@us.es}. Partially supported by grant MTM$2016$-$76990$-P, Ministry of Economy and Competitiveness (Spain).}
	\and
	\textsc{Diego A. Souza}\thanks{Department of Mathematics, Federal University of Pernambuco, UFPE, CEP 50740-545, Recife,
PE, Brazil. E-mail: {\tt diego.souza@dmat.ufpe.br}. Partially supported
by CNPq (Brazil) by the grants $313148$/$2017$-1 and Propesq (UFPE)-
Edital Qualis A. }
}
\date{}
\begin{document}
\maketitle

\begin{abstract}
	In this paper, we consider several geometric inverse problems for linear elliptic systems.
	We prove uniqueness and stability results. In particular, we show the way that the observation depends on the perturbations of the domain.
	In some particular situations, this provides a strategy that could be used to compute approximations to the solution of the inverse problem.
	In the proofs, we use techniques related to (local) Carleman estimates and differentiation with respect to the domain.
\end{abstract}

{\bf Keywords and phrases:} inverse problems, nonscalar elliptic systems, unique continuation, domain variation techniques, reconstruction.

{\bf Mathematics Subject Classification:}{ 35J55, 65N21, 65M32, 49K40.}


\section{Introduction} 

	Let $\Omega\subset\mathbb{R}^N$ be a simply connected bounded domain whose boundary $\partial\Omega$ is of class $W^{2,\infty}$, let $D^*$ be a fixed nonempty open set with $D^*\subset\subset\Omega$ and let $\gamma\subset\partial\Omega$ be a nonempty open set. 
	In what follows, the symbols $C, C_1, C_2, \dots$ will be used to denote generic positive constants.
	Sometimes, we will indicate the data on which they depend by writting (for example) $C(\Omega,D^*).$	
	
	Let us consider the following family of subsets of $D^*$:
\[
	\mathcal{D}=\left\{D \subset \Omega:D\text{ is a nonempty simply connected domain,}\ \overline{D}\subset D^*\ \text{and} \ \partial D \text{ is of class } W^{2,\infty}\right\}
\]
	and  let us denote by $\mathcal{A}$ the set of all $(a, b, A, B)$ such that $a,\,b,\, A,\, B\in L^{\infty}(\Omega)$ and
        \begin{equation}\label{ha}
		\left[
		\begin{array}{c}
			\xi_1\\
			\xi_2
		\end{array}
	\right]^t
	\left[
		\begin{array}{cc}
			a(x) & b(x)\\
			A(x) & B(x)\\
		\end{array}
	\right]
	\left[
		\begin{array}{c}
			\xi_1\\
			\xi_2
		\end{array}
	\right]
	\geq -\lambda(|\xi_1|^2 + |\xi_2|^2)\,\,\,\forall (\xi_1,\xi_2)\in\mathbb{R}^2,\,\,\mbox{a.e. in}\,\,\Omega,
        \end{equation}
	 for  some $\lambda$ with $0 < \lambda < \mu_1(\Omega)^{-1}$, where $\mu_1(\Omega)$ is 
	 the smallest positive constant such that
        \begin{equation*}
\displaystyle\|u\|^2_{L^2} \leq \mu_1(\Omega)\displaystyle\|\nabla u\|^2_{L^2}\quad \forall u\in H_0^1(\Omega).
        \end{equation*}

	 In this paper, we will always assume that $(\varphi,\psi)\in H^{1/2}(\partial\Omega)\times H^{1/2}(\partial\Omega)$ and $(a,b,A,B)\in \mathcal{A}$. Under these circumstances it is well known that, for any $D\in \mathcal{D},$ 
	there exists a unique solution $(y,z)\in H^1(\Omega\backslash\overline{D})\times H^1(\Omega\backslash\overline{D})$ to the system
        \begin{equation}\label{an}
\left\{
\begin{array}{lll}
-\Delta y + ay + bz = 0&\mbox{in}& \Omega\backslash\overline{D},\\
-\Delta z + Ay + Bz = 0& \mbox{in}& \Omega\backslash\overline{D},\\ 
y = \varphi,\, z = \psi &\mbox{on}&\partial\Omega,\\
y = z = 0& \mbox{on}&\partial D,
\end{array}
\right.
        \end{equation}
	furthermore satisfying
\[
\|(y,z)\|_{H^1(\Omega\backslash\overline{D})} \leq C(\Omega, D^*)\|(\varphi,\psi)\|_{H^{1/2}(\partial\Omega)}. \label{ha1}
\]


	In many physical phenomena, in order to predict the result of a measurement, we need a model of the system under investigation (typically a PDE system) and an explanation or interpretation of the observed quantities.
	If we are able to compute the solution to the model and quantify relevant observations, we say that we have solved the {\it forward} or {\it direct} problem.
	Contrarily, the {\it inverse} problem consists of using the observations to recover unknown data that characterize the model.
	For details about the main questions concerning inverse problems for PDEs from the theoretical and numerical viewpoints, see for instance the book~\cite{Isakov}.

	In this paper, we will deal with the following inverse geometric problem:
\begin{quote}{\it 
	Given $(\varphi,\psi)\in H^{1/2}(\partial\Omega)\times H^{1/2}(\partial\Omega)$ and $(\alpha,\beta)\in H^{-1/2}(\partial\Omega)\times H^{-1/2}(\partial\Omega)$, find a set $D\in\mathcal{D}$ such that the solution $(y,z)$ to the linear system
	 \eqref{an}
	satisfies the additional conditions:
        \begin{equation}
\left.\frac{\partial y}{\partial n}\right|_{\gamma} = \alpha\,\,\,\mbox{{\it and}}\,\,\,\left.\frac{\partial z}{\partial n}\right|_{\gamma} = \beta. \label{an1}
        \end{equation}}
\end{quote}

	A motivation of problems of this kind can be found, for instance, when one tries to compute the stationary temperature of a chemically reacting plate whose shape is unknown. 
	More precisely, \eqref{an}-\eqref{an1} has the following  interpretation: assume that a chemical product, sensible to temperature effects, fills an unknown domain $\Omega\backslash\overline{D}$; 
	its concentration $y = y(x)$ and its temperature $z = z(x)$ are imposed on the whole outer boundary $\partial\Omega,$ the associated normal fluxes are measured
	on $\gamma\subset \partial \Omega$ and both $y$ and $z$ vanish on the boundary of the non-reacting unknown set $D$; what we pretend to do is to determine $D$ 
	from these data and measurements.

	In the context of the inverse problem \eqref{an}-\eqref{an1}, three main questions appear.
	They are the following:
	
\begin{itemize}

\item{\bf Uniqueness:}
	Let $(\alpha^0,\beta^0)$ and $(\alpha^1,\beta^1)$ be two observations and let $(y^0,z^0)$ and $(y^1,z^1)$ be solutions to \eqref{an} satisfying the identities \eqref{an1} associated to the sets $D^0$ and $D^1$, respectively.  
	The question is: do we have $D^0 = D^1$ whenever $(\alpha^0,\beta^0) = (\alpha^1,\beta^1)$?
	
\item{\bf Stability:}
	Find an estimate of the ``distance''~$\mu_d(D^0,D^1)$ from $D^0$ to $D^1$ in terms of the ``distance'' $\mu_0((\alpha^0,\beta^0),(\alpha^1,\beta^1))$ from~$(\alpha^0,\beta^0)$ to~$(\alpha^1,\beta^1)$ of the form
        \begin{equation*}
\mu_d(D^0,D^1) \leq \Phi(\mu_0((\alpha^0,\beta^0),(\alpha^1,\beta^1))),
        \end{equation*}
where the function $\Phi : \mathbb{R}^+\mapsto \mathbb{R}^+$ satisfies $\Phi(s)\rightarrow 0$ as $s\rightarrow 0$, valid at least whenever $(\alpha^0,\beta^0)$ and $(\alpha^1,\beta^1)$ are ``close'' to a fixed $(\bar{\alpha},\bar{\beta})$.


\item {\bf Reconstruction:}
	Find an iterative algorithm to compute the unknown domain $D$ from the observation~$(\alpha,\beta)$.

\end{itemize}

	In this paper, we focus on the uniqueness and the stability of the inverse problem \eqref{an}-\eqref{an1}. 
	Specifically, our first main result is the following:
	
\begin{thm} \label{an4}
	Assume that $(\varphi,\psi)\in H^{1/2}(\partial\Omega)\times H^{1/2}(\partial\Omega)$ is nonzero.
	For $i=0,1$, let  $(y^i,z^i)$ be the unique weak solution to \eqref{an} with $D$ replaced by~$D^i$ and let $\alpha^i$ and $\beta^i$ be given by the corresponding equalities~\eqref{an1}.
	Then one has the following:
	$$
	(\alpha^0,\beta^0) = (\alpha^1,\beta^1) \quad\Longrightarrow\quad D^0 = D^1.
	$$
\end{thm}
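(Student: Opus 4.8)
The plan is to combine a unique continuation argument with an energy estimate that exploits the structural hypothesis~\eqref{ha}. First I would set $(Y,Z):=(y^0-y^1,z^0-z^1)$. Since $y^0=y^1=\varphi$ and $z^0=z^1=\psi$ on $\partial\Omega$ and, by~\eqref{an1} together with the hypothesis $(\alpha^0,\beta^0)=(\alpha^1,\beta^1)$, also $\partial y^0/\partial n=\partial y^1/\partial n$ and $\partial z^0/\partial n=\partial z^1/\partial n$ on $\gamma$, the pair $(Y,Z)$ has vanishing Cauchy data on $\gamma$ and solves, on the common region $\Omega\setminus(\overline{D^0}\cup\overline{D^1})$, the homogeneous system obtained from~\eqref{an}. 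Because the principal part of~\eqref{an} is the (decoupled) Laplacian and the zeroth-order coupling coefficients lie in $L^\infty$, the system enjoys the unique continuation property; applying it, starting from the vanishing Cauchy data on $\gamma$, I would conclude that $(Y,Z)\equiv 0$, i.e. $(y^0,z^0)=(y^1,z^1)$, on the connected component $\mathcal{O}$ of $\Omega\setminus(\overline{D^0}\cup\overline{D^1})$ whose boundary contains $\gamma$ (equivalently, the outer component adjacent to $\partial\Omega$).

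Next I would argue by contradiction, assuming $D^0\neq D^1$. Using that both sets are domains with $W^{2,\infty}$ boundary, so that $\mathrm{int}(\overline{D^i})=D^i$, at least one of $D^1\setminus\overline{D^0}$ or $D^0\setminus\overline{D^1}$ is nonempty; say, after relabelling, that $D^1\setminus\overline{D^0}\neq\emptyset$, and let $V$ be one of its connected components chosen so that the portion of $\partial V$ lying on $\partial D^1$ is accessible from the outer component $\mathcal{O}$. Then $V$ is a nonempty domain contained in $\Omega\setminus\overline{D^0}$, so $(y^0,z^0)$ solves the homogeneous system~\eqref{an} in $V$. I claim that $(y^0,z^0)=(0,0)$ on the whole of $\partial V$. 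Indeed, $\partial V$ splits into a part lying on $\partial D^0$, where $y^0=z^0=0$ by the boundary condition in~\eqref{an} for $D^0$, and a part lying on $\partial D^1$; on the latter, approaching from $\mathcal{O}$ and using $(y^0,z^0)=(y^1,z^1)$ there together with $y^1=z^1=0$ on $\partial D^1$, we again obtain $y^0=z^0=0$. Hence $(y^0,z^0)\in H^1_0(V)\times H^1_0(V)$.

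Now I would test the two equations of~\eqref{an} in $V$ against $y^0$ and $z^0$ respectively, integrate by parts (the boundary terms vanish since $(y^0,z^0)\in H^1_0(V)\times H^1_0(V)$), and add, obtaining
\begin{equation*}
\int_V\left(|\nabla y^0|^2+|\nabla z^0|^2\right)dx+\int_V\left(a\,(y^0)^2+(b+A)\,y^0z^0+B\,(z^0)^2\right)dx=0.
\end{equation*}
By the coercivity condition~\eqref{ha} applied pointwise with $(\xi_1,\xi_2)=(y^0,z^0)$, the second integrand is bounded below by $-\lambda\big((y^0)^2+(z^0)^2\big)$, and since $V\subset\Omega$ the Poincaré inequality holds on $H^1_0(V)$ with constant $\mu_1(V)\leq\mu_1(\Omega)$. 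Therefore
\begin{equation*}
0\geq\left(1-\lambda\,\mu_1(\Omega)\right)\int_V\left(|\nabla y^0|^2+|\nabla z^0|^2\right)dx,
\end{equation*}
and since $0<\lambda<\mu_1(\Omega)^{-1}$ the factor $1-\lambda\,\mu_1(\Omega)$ is positive, forcing $\nabla y^0=\nabla z^0=0$ in $V$ and hence $(y^0,z^0)\equiv(0,0)$ in $V$. As $V$ is a nonempty open subset of the connected set $\Omega\setminus\overline{D^0}$, a final application of unique continuation yields $(y^0,z^0)\equiv(0,0)$ in all of $\Omega\setminus\overline{D^0}$; in particular $(\varphi,\psi)=(y^0,z^0)|_{\partial\Omega}=(0,0)$, contradicting the assumption that $(\varphi,\psi)$ is nonzero. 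This contradiction shows $D^0=D^1$.

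The delicate points are the two invocations of unique continuation and the topological selection of $V$. Because the principal part of~\eqref{an} is the identity times the Laplacian and the coupling enters only through the bounded zeroth-order terms, the required continuation reduces to a scalar argument for the differential inequality $|\Delta y^0|+|\Delta z^0|\leq C(|y^0|+|z^0|)$ — classically obtained via Carleman estimates — so I expect this to be robust rather than the true obstacle. The genuinely careful step is guaranteeing a component $V$ of $D^1\setminus\overline{D^0}$ whose $\partial D^1$-portion of boundary is accessible from the outer component $\mathcal{O}$, so that the matching $(y^0,z^0)=(y^1,z^1)$ really does transfer the homogeneous Dirichlet data of $(y^1,z^1)$ onto $(y^0,z^0)$ there; here the simple-connectedness of $\Omega$ and of the sets in $\mathcal{D}$ is what rules out enclosed pockets and makes this accessibility argument go through.
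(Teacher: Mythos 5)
Your overall architecture coincides with the paper's: kill the difference $(Y,Z)$ on the outer component $O$ by unique continuation from the vanishing Cauchy data on $\gamma$, then show that $(y^0,z^0)$ solves a homogeneous Dirichlet problem on a nonempty ``pocket'', conclude that it vanishes there via the coercivity coming from \eqref{ha}, and propagate the zero by unique continuation to contradict $(\varphi,\psi)\neq 0$. Your energy estimate on the pocket is a correct (and more explicit) version of the paper's appeal to uniqueness for the Dirichlet problem, and the reduction of the Cauchy-data continuation on $\gamma$ to interior unique continuation (extension by zero into a small exterior ball) is the same device the paper uses.

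The gap is exactly at the point you flagged, and your proposed resolution is wrong: simple connectedness of $\Omega$, $D^0$ and $D^1$ does \emph{not} rule out enclosed pockets, i.e.\ connected components of $\Omega\setminus\overline{D^0\cup D^1}$ other than the outer one. Take $D^1$ a thickened letter ``C'' and $D^0$ a disc plugging its opening: all three sets are simply connected, yet the hole of the ``C'' becomes a trapped component $W\neq O$. In that configuration $D^1\setminus\overline{D^0}$ has a single component $V$, and the portion of $\partial V$ lying on $\partial D^1$ along the inner arc of the ``C'' is adjacent to $W$, not to $O$; since you only know $(y^0,z^0)=(y^1,z^1)$ in $O$, you cannot transfer the homogeneous Dirichlet data of $(y^1,z^1)$ to $(y^0,z^0)$ on that part of $\partial V$, and your claim that $(y^0,z^0)\in H_0^1(V)\times H_0^1(V)$ fails. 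The paper repairs precisely this by enlarging the pocket: it sets $D^2:=D^1\cup\bigl[(\Omega\setminus\overline{D^0})\cap(\Omega\setminus\overline{O})\bigr]$, which absorbs every trapped component, so that $\partial(D^2\setminus\overline{D^0})$ decomposes into a part on $\partial D^0$ (where $y^0=z^0=0$ by the boundary condition in \eqref{an}) and a part on $\partial D^1\cap\partial O$ (where the matching with $(y^1,z^1)$ is available). Replacing your $V$ by $D^2\setminus\overline{D^0}$, the rest of your argument goes through essentially verbatim.
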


	The proof is given in~Section~\ref{unique}.
	It relies on some ideas from~\cite{Fabre}; more precisely, we use two well known properties of \eqref{an}: unique continuation and well-posedness in the Sobolev space $H^1$.
	
\begin{rmk}{\rm
	Note that, if $(\varphi,\psi)=(0,0),$ then the associated solution to \eqref{an} is zero, for any $D\in \mathcal{D}$.
	Therefore, the uniqueness problem has no sense when $(\varphi,\psi)=(0,0)$. \Fin
	}

\end{rmk}

\begin{rmk}{\rm
	In the one-dimensional case, if one consider \eqref{an} with only one boundary observation, uniqueness may not hold. 
	Indeed, suppose that $\Omega = (0,1)$, $L \in (0,1)$ and consider the system
        \begin{equation}\label{pink 4}
	\left\{
		\begin{array}{lll}
			-y_{xx} + \eta^{2} y + bz = 0&\mbox{in}&(0,L),\\
			-z_{xx} + Ay + \zeta^2 z = 0&\mbox{in}&(0,L),\\
			y(0) = z(0) = 0, &&\\
			y_x(0) = 0,&&
		\end{array}
	\right.
        \end{equation}
where $A,b,\eta,\zeta\in\mathbb{R}$ (all them different from zero) and $|A| +|b| < 2|\eta||\zeta|$.
	Then, $(\eta^2,b,A,\zeta^2)\in\mathcal{A}$ and, using the {\it parameter variation method}, we get that a solution $(y,z)$ to \eqref{pink 4} is given by 
        \begin{equation*}
z(x) = \dfrac{K}{\zeta}\sinh(\zeta x) + 
\dfrac{A}{\zeta}\displaystyle\int_{0}^{x} y(s)\sinh[\zeta(x - s)]\,ds, \quad y(x) = \dfrac{b}{\eta}\displaystyle\int_{0}^{x}z(s)\sinh[\eta(x - s)] ds
        \end{equation*}
 for each $K\in\mathbb{R}$. 
	Therefore, if $K\neq 0$ we have $z_x(0)\neq 0$ and this implies non-uniqueness.
	
	For $N\geq 2$, the uniqueness in the N-dimensional case with only one information on $\gamma$ is, to our knowledge, an open question. \Fin}
\end{rmk}

	In order to state our main stability result, let us introduce some notation.
	Thus, let $D^0 \in \mathcal{D}$ be a fixed subdomain, let $\mu \in W^{1,\infty}(\R^N;\R^N)$ satisfy
	\[
\|\mu\|_{W^{1,\infty}} \leq \epsilon < 1, \ \ \mu = 0 \ \mbox{ in } \ \Omega\backslash\overline{D}^*
	\]
and, for any $\sigma \in (-1,1)$,  let us denote by $m_\sigma$, $D^\sigma$ and~$(y_\sigma,z_\sigma)$ respectively the mapping $m_\sigma := I + \sigma\mu$, the open set $D^\sigma := m_\sigma(D^0)$ and the solution to~\eqref{an} with $D$ replaced by~$D^\sigma$.
	For simplicity, it will be assumed that the coefficients $a,b,A,B$ are constant.
	The following holds:

\begin{thm} \label{stab}
	There exists $\sigma_0 > 0$ with the following properties:
	
\begin{enumerate}

\item The mapping
	        \begin{equation}\label{msigma}
\sigma \mapsto \Bigl.\left(\frac{\partial y_{\sigma}}{\partial n},\frac{\partial z_{\sigma}}{\partial n}\right)\Bigr|_\gamma
	        \end{equation}
is well defined and analytic in~$(-\sigma_0,\sigma_0)$, with values in~$H^{-1/2}(\gamma)^2$.

\item Either $m_\sigma(D^0) = D^0$ for all $\sigma \in (-\sigma_0,\sigma_0)$ (and then the mapping in~\eqref{msigma} is constant), or there exist $\sigma_* \in (0,\sigma_0)$, $C > 0$ and~$k \geq 1$ (an integer) such that
	\[
\left\|\left(\frac{\partial y_{\sigma}}{\partial n},\frac{\partial z_{\sigma}}{\partial n}\right) 
- \left(\frac{\partial y_0}{\partial n},\frac{\partial z_0}{\partial n}\right)\right\|_{H^{-1/2}(\gamma)^2}
\geq C|\sigma|^k \quad \forall \sigma \in (-\sigma_*,\sigma_*).
	\]
\end{enumerate}
\end{thm}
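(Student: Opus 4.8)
The plan is to exploit analytic dependence of the solution on the domain-perturbation parameter $\sigma$. First I would fix a reference configuration and pull back all problems to the $\sigma$-independent domain $\Omega\backslash\overline{D^0}$ via the diffeomorphism $m_\sigma = I + \sigma\mu$. Writing $(\tilde y_\sigma,\tilde z_\sigma)(x) := (y_\sigma,z_\sigma)(m_\sigma(x))$, the transported pair solves an elliptic system on the fixed domain whose coefficients are obtained from the Jacobian $Dm_\sigma = I + \sigma D\mu$ and its inverse. Since $\|\mu\|_{W^{1,\infty}}\le \epsilon < 1$, for $|\sigma|$ small enough $m_\sigma$ is a bi-Lipschitz homeomorphism and the matrix $Dm_\sigma$ is invertible, so $(Dm_\sigma)^{-1}$, $\det Dm_\sigma$ and the resulting transported coefficients are all \emph{analytic} in $\sigma$ near $0$, with values in $L^\infty$. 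This choice of $\sigma_0$ (ensuring invertibility and analyticity of the pulled-back operator) is what fixes $\sigma_0$ in the statement.

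For part~1, I would set up the transported problem as $\mathcal{L}_\sigma(\tilde y_\sigma,\tilde z_\sigma) = 0$ with the same (now $\sigma$-dependent) boundary data, where $\mathcal{L}_\sigma$ is a uniformly elliptic operator on $H^1(\Omega\backslash\overline{D^0})^2$ depending analytically on $\sigma$. The coercivity hypothesis~\eqref{ha} with $0<\lambda<\mu_1(\Omega)^{-1}$ guarantees that $\mathcal{L}_0$ is an isomorphism onto the dual, and by continuity $\mathcal{L}_\sigma$ remains an isomorphism for small $|\sigma|$. Then the analytic implicit function theorem (in Banach spaces) yields that $\sigma\mapsto(\tilde y_\sigma,\tilde z_\sigma)$ is analytic with values in $H^1(\Omega\backslash\overline{D^0})^2$. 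Since $\mu = 0$ on $\Omega\backslash\overline{D^*}$, near $\gamma\subset\partial\Omega$ the map $m_\sigma$ is the identity, so on a neighborhood of $\gamma$ one has $(y_\sigma,z_\sigma) = (\tilde y_\sigma,\tilde z_\sigma)$ and the normal derivatives coincide; composing the analytic $H^1$-valued map with the continuous normal-trace operator $H^1 \to H^{-1/2}(\gamma)$ (using interior elliptic regularity to make this trace well defined and analytic) gives analyticity of~\eqref{msigma}.

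For part~2, the dichotomy comes from the structure of analytic functions. Consider the analytic map $F(\sigma) := \left(\frac{\partial y_\sigma}{\partial n},\frac{\partial z_\sigma}{\partial n}\right) - \left(\frac{\partial y_0}{\partial n},\frac{\partial z_0}{\partial n}\right)$ valued in the Banach space $H^{-1/2}(\gamma)^2$. A Banach-valued analytic function on an interval either vanishes identically or has a zero of finite order $k\ge 1$ at $\sigma=0$; in the latter case $F(\sigma) = \sigma^k G(\sigma)$ with $G$ analytic and $G(0)\ne 0$, which immediately yields $\|F(\sigma)\|\ge C|\sigma|^k$ on some $(-\sigma_*,\sigma_*)$. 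The first alternative, $F\equiv 0$, means the observation is constant in $\sigma$; here I would invoke the uniqueness Theorem~\ref{an4}: constancy of the Neumann data forces $D^\sigma = D^0$ for all small $\sigma$, i.e. $m_\sigma(D^0)=D^0$, which is precisely the excluded case. The main obstacle is the rigorous verification that the pulled-back operator and its boundary data depend analytically (not merely smoothly) on $\sigma$ in the operator norm and that the normal-trace on $\gamma$ is an analytic $H^{-1/2}(\gamma)^2$-valued map; this requires careful bookkeeping of the transported bilinear form and a genuine \emph{analytic} Fredholm/implicit-function argument rather than a formal differentiation.
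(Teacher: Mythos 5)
Your proposal is correct and follows essentially the same route as the paper: pull back to the fixed domain $\Omega\backslash\overline{D}^0$ via $m_\sigma$, obtain an isomorphic transported operator by Lax--Milgram, deduce analyticity of $\sigma\mapsto(u_0,v_0)$ from the analyticity of $\sigma\mapsto T$ (Simon/Henrot) and hence of the boundary observation (using $\mu\equiv 0$ near $\gamma$), and then apply the finite-order-vanishing dichotomy for the Banach-valued analytic map together with Theorem~\ref{an4} to rule out identically vanishing coefficients when $m_\sigma(D^0)\neq D^0$. Your factorization $F(\sigma)=\sigma^k G(\sigma)$ is just a repackaging of the paper's tail estimate on the power series, so no substantive difference remains.
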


%
%
%

		In~\cite{Kavian} and~\cite{BY}, a similar geometric inverse problem for one scalar elliptic equation is studied.
		For geometric inverse problems for nonlinear models, like Stokes, Navier-Stokes and Boussinesq systems, the uniqueness has been analyzed in~\cite{Alvarez}, \cite{Doubova1} and~\cite{Doubova}, respectively.
		Reconstruction algorithms have been considered and applied in~\cite{Alvarez1} and~\cite{Ben} for the stationary Stokes system and in~\cite{Doubova1} and~\cite{Doubova} for the Navier-Stokes and Boussinesq systems.
		
		Note that, in the applications to fluid mechanics, the goal is to identify the shape of a body around which a fluid flows from measurements performed far from the body.
		In other contexts, the domain $D$ can represent a rigid body immersed in an elastic medium.
		Thus, related inverse problems with relevant applications in Elastography have been analyzed in~\cite{Doubova2} for the wave equation and~\cite{Doubova3} for the Lam\'e system.
		



	This paper is organized as follows.
	In Section~\ref{unique}, we prove a unique continuation property for the solutions to~\eqref{an} and, then, we prove the uniqueness result (Theorem~\ref{an4}).
	Section~\ref{stability} is devoted to proof of the stability result (Theorem~\ref{stab}).
	Finally, in Section~\ref{sec5}, we present  some additional comments and open questions.
	
\section{Unique continuation and uniqueness}\label{unique}

	In this section, we analyze a unique continuation property for \eqref{an}. 
	More precisely, we have the following result:
\begin{thm} \label{mt0}
	Let $G\subset\mathbb{R}^N$ be a bounded domain whose boundary $\partial G$ is of class $W^{1,\infty}$, let $\omega\subset G$ be a nonempty open set 
	and assume that $a, b, A, B\in L^\infty(G)$. Then, any solution $(y,z)\in H^1(G)\times H^1(G)$ to the linear system
        \begin{equation}\label{mt7}
\left\{
\begin{array}{lll}
-\Delta y + ay + bz = 0&\mbox{in}& G,\\
-\Delta z + Ay + Bz = 0&\mbox{in}& G,\\
\end{array}
\right.
        \end{equation}
satisfying
        \begin{equation}
y = z = 0\,\,\mbox{in}\,\, \omega, \label{mt8}
        \end{equation}
 is zero everywhere.
\end{thm}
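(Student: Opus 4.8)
The plan is to regard \eqref{mt7} as a single vector-valued equation whose principal part is diagonal (the scalar Laplacian acting on each component) and to exploit that the coupling enters only through zeroth-order terms. Writing $U=(y,z)$, I would first prove a \emph{local} unique continuation property by means of a Carleman estimate for $-\Delta$, and then propagate the vanishing of $U$ from $\omega$ to the whole connected set $G$ by a standard chain-of-balls argument. The first step is to reduce the system to a differential inequality: by interior elliptic regularity $(y,z)\in H^2_{\mathrm{loc}}(G)\times H^2_{\mathrm{loc}}(G)$, and from \eqref{mt7} together with $a,b,A,B\in L^\infty(G)$ one obtains the pointwise bound
$$
|\Delta y| + |\Delta z| \le C\,(|y| + |z|)\quad\text{a.e. in } G,
$$
with $C$ depending only on the $L^\infty$ norms of the coefficients. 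Thus $U$ satisfies an elliptic differential inequality in which the coupling has been absorbed into the right-hand side and the principal operator is the scalar Laplacian in each component.

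The crux is the local step. Fix $x_0\in G$ and choose a weight $\varphi=e^{\lambda\psi}$ (with $\psi$ a suitable quadratic function and $\lambda$ large) for which the classical interior Carleman estimate for the Laplacian holds: there exist $C>0$ and $\tau_0>0$ such that
$$
\int e^{2\tau\varphi}\bigl(\tau^{3}|w|^{2}+\tau|\nabla w|^{2}\bigr)\,dx \le C\int e^{2\tau\varphi}\,|\Delta w|^{2}\,dx
$$
for all $w\in C_c^\infty(B)$ and all $\tau\ge\tau_0$, where $B$ is a small ball around $x_0$. Applying this to suitable cutoffs of $y$ and of $z$, adding the two inequalities and inserting the differential inequality on the right-hand side yields, schematically,
$$
\int e^{2\tau\varphi}\,\tau^{3}\bigl(|y|^{2}+|z|^{2}\bigr)\,dx \le C\int e^{2\tau\varphi}\bigl(|y|^{2}+|z|^{2}\bigr)\,dx + (\text{commutator/cutoff terms}).
$$
For $\tau$ large the first term on the right is absorbed by the left-hand side; once the cutoff contributions (supported away from $x_0$, where $U$ already vanishes) are controlled, one concludes that $U\equiv 0$ on a neighborhood of $x_0$.

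Finally I would globalize. Let $Z$ be the set of points of $G$ possessing a neighborhood on which $U\equiv 0$. Then $Z$ is open by definition, nonempty since $\omega\subset Z$, and closed in $G$ by the local unique continuation just established: any boundary point of $Z$ has $U$ vanishing on one side, hence, by the Carleman argument applied there, on a full neighborhood. Since $G$ is connected, $Z=G$, i.e. $U\equiv 0$ in $G$.

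I expect the main obstacle to be the local step—setting up the weight and cutoffs so that the commutator and boundary terms are genuinely controlled, and, above all, verifying that the zeroth-order coupling $(ay+bz,\,Ay+Bz)$ is harmless. The latter is exactly where the diagonal structure of the principal part is used: because each equation has the \emph{same} scalar principal operator $-\Delta$, a single scalar Carleman estimate applies to both components, and the coupling, being of order zero, is dominated by the weight $\tau^{3}$ for $\tau$ large. Were the principal part non-diagonal or of genuinely different orders in the two equations, this absorption could fail and a more delicate system Carleman estimate would be needed.
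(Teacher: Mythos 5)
Your outline follows essentially the same route as the paper: an interior Carleman estimate for the scalar Laplacian (the paper uses the semiclassical form from Fabre, with small parameter $h$ playing the role of your $1/\tau$), applied separately to cutoffs of $y$ and $z$, the two inequalities added, the zeroth-order coupling absorbed for large $\tau$, and the vanishing propagated from $\omega$ to all of $G$ by connectedness (your open--closed argument is the paper's chain of balls along a curve joining a point of $\omega$ to an arbitrary point of $G$). The one place where your justification is not right as stated is the parenthetical claim that the cutoff contributions are ``supported away from $x_0$, where $U$ already vanishes.'' In the propagation configuration --- $x_0$ on the boundary of your set $Z$, or, in the paper's normalization, $U=(y,z)$ known to vanish only on $B_R$ while one works on $B_{2R}$ --- the gradient and Laplacian of the cutoff live on \emph{two} shells: an inner one, where $U$ does vanish and the commutator terms $2\nabla\zeta\cdot\nabla y+y\Delta\zeta$ indeed drop out, and an outer one, near the exterior boundary of the ball, where $U$ does \emph{not} vanish. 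The outer contribution is killed not by the vanishing of $U$ but by the weight: one must choose $\varphi$ strictly decreasing in the direction of propagation (the paper takes the radial, decreasing weight $\varphi(x)=e^{-\delta|x|^2}$), so that $\varphi$ on the outer shell is strictly smaller than on the region to be gained; dividing by the maximal weight on that region and letting $h\to 0$ (equivalently $\tau\to\infty$) then annihilates the outer commutator terms while the target integral survives. Without this monotonicity of the weight across the annulus to be gained, the absorption you describe does not close, so this is the detail you must make precise; the rest of your argument, including the reduction to the differential inequality $|\Delta y|+|\Delta z|\le C(|y|+|z|)$ and the observation that the diagonal principal part lets a single scalar estimate handle both components, matches the paper.
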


	As already mentioned, the proof relies on some ideas from \cite{Fabre}. In fact, we will divide the proof in two parts: (i) the proof of Theorem \ref{mt0} when $\omega$ and $G$ are open balls and (ii) the proof for general domains $\omega$ and $G$, 
	using a compactness argument.


\subsection{A unique continuation property for balls}

	In this Section, we prove a very particular result concerning the unique continuation of the solutions to \eqref{mt7}: 
\begin{lem}\label{st1}
	Assume that $R>0$, $x_0\in \mathbb{R}^N$ and $a,b, A, B\in L^\infty(B_{2R}(x_0))$, where $B_{2R}(x_0)$ denotes the open ball of radius $2R$ centered at $x_0$. 
	For any solution $(y,z)\in H^1(B_{2R}(x_0))\times H^1(B_{2R}(x_0))$ to the linear system \eqref{mt7} in $B_{2R}(x_0)$, the following property holds:
\[
	(y,z)=(0,0)~\text{in}~ B_{R}(x_0)\quad \Rightarrow \quad (y,z)=(0,0)~\text{in}~ B_{2R}(x_0).
\]
\end{lem}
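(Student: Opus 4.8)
The plan is to deduce the result from a local Carleman estimate for the Laplace operator, exploiting that the coupling between $y$ and $z$ in~\eqref{mt7} is of lower (zeroth) order. The starting observation is that, since $a,b,A,B\in L^\infty(B_{2R}(x_0))$ and $(y,z)\in H^1\times H^1$, the right-hand sides $ay+bz$ and $Ay+Bz$ belong to $L^2$; hence by interior elliptic regularity $(y,z)\in H^2_{\mathrm{loc}}(B_{2R}(x_0))^2$, and moreover the pointwise differential inequality
\[
|\Delta y(x)|+|\Delta z(x)|\le C_0\bigl(|y(x)|+|z(x)|\bigr)\qquad\text{a.e. in }B_{2R}(x_0)
\]
holds with $C_0=\|(a,b,A,B)\|_{L^\infty}$. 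I would deliberately avoid trying to decouple the system into a single fourth-order equation for $y$ (by solving for $z$ in the first equation), since the coefficients are merely bounded and not differentiable; the coupled Carleman argument circumvents this difficulty entirely.

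First I would fix a smooth radial weight $\phi$, increasing along the outward radial direction and enjoying the pseudo-convexity property needed for the Laplacian, together with a cutoff function $\chi\in C_0^\infty(B_{2R}(x_0))$ equal to $1$ on the region where vanishing is to be propagated. The key tool is the classical Carleman inequality: there exist $s_0>0$ and $C>0$ such that
\[
s^3\!\int e^{2s\phi}|w|^2\,dx+s\!\int e^{2s\phi}|\nabla w|^2\,dx\le C\!\int e^{2s\phi}|\Delta w|^2\,dx
\]
for every $w\in C_0^\infty(B_{2R}(x_0))$ and every $s\ge s_0$; by density this extends to compactly supported $w\in H^2$. Applying it to $w=\chi y$ and to $w=\chi z$ and adding, the right-hand side splits into a genuinely interior contribution bounded, through the differential inequality above, by $C\int e^{2s\phi}\chi^2(|y|^2+|z|^2)$, plus commutator terms coming from $[\Delta,\chi]y$ and $[\Delta,\chi]z$, which are supported only where $\nabla\chi\ne0$.

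The mechanism of the proof is then a standard absorption. For $s$ large the interior contribution is swallowed by the $s^3$-term on the left, while the commutator terms survive only on $\mathrm{supp}\,\nabla\chi$. By arranging the geometry so that $\phi$ is strictly smaller on $\mathrm{supp}\,\nabla\chi$ than on the target region, letting $s\to+\infty$ forces $(y,z)=(0,0)$ on that region. Starting from the hypothesis $(y,z)=(0,0)$ in $B_R(x_0)$ and sweeping the level spheres of $\phi$ outward, this propagates the vanishing set across the annulus $B_{2R}(x_0)\setminus B_R(x_0)$ and yields $(y,z)=(0,0)$ in all of $B_{2R}(x_0)$.

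I expect the main obstacle to be precisely the geometric bookkeeping in this last step: one must choose the weight $\phi$ and the associated family of cutoffs so that, simultaneously, $\phi$ satisfies the convexity condition that makes the Carleman estimate valid, and its level sets foliate the annulus so that the surviving commutator terms always live at strictly lower weight than the part of the domain where vanishing is to be concluded. Balancing these two requirements, together with justifying the approximation needed to apply the estimate to the merely $H^2_{\mathrm{loc}}$ pair $(y,z)$, is the technical heart of the argument.
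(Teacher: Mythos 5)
Your strategy is the one the paper follows: apply a Carleman estimate for $-\Delta$ separately to cutoffs of $y$ and $z$, treat the zeroth-order coupling as a right-hand side absorbed by the large parameter, and conclude from the commutator terms supported on the gradient of the cutoff by comparing weights. There is, however, one concrete error in your setup: you take the radial weight $\phi$ to be \emph{increasing} in the outward direction, which is incompatible with your own (correct) requirement that $\phi$ be strictly smaller on $\mathrm{supp}\,\nabla\chi$ than on the target region. Since $\chi$ must vanish near $|x-x_0|=2R$ to be admissible in the Carleman estimate, the surviving commutator terms necessarily live on an \emph{outer} transition annulus (the inner transition can be placed inside $B_R(x_0)$, where they vanish thanks to the hypothesis), so the weight must \emph{decrease} outward; with an increasing weight the commutators would carry the largest weight and could never be beaten as $s\to\infty$. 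This is not cosmetic: the monotonicity of the weight is exactly what fixes the direction in which vanishing propagates.

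The paper resolves precisely the points you flag as the ``technical heart'': it takes $\varphi(x)=e^{-\delta|x-x_0|^2}$ with $\delta>4/R^2$, verifies the Poisson-bracket (pseudoconvexity) condition by an explicit computation on the annulus $U=\{R/2<|x-x_0|<2R\}$ --- a radial weight cannot satisfy it at its critical point $x_0$, which is why the cutoff is supported in an annulus $\{3R/4\le|x-x_0|\le 2R-\varepsilon\}$ rather than in the full ball --- and then concludes vanishing on $\{|x-x_0|\le 2R-3\varepsilon\}$ in a single application of the estimate, letting $\varepsilon\to0$ at the end instead of sweeping level sets. Your preliminary observations (interior $H^2_{\mathrm{loc}}$ regularity so that the cutoff functions lie in $H^2_0$, and the refusal to decouple the system because the coefficients are only $L^\infty$) are correct and are in fact slightly more careful than the paper's exposition. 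With the monotonicity corrected and the weight chosen as above, your argument coincides with the paper's.
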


	Before proving this lemma, let us introduce $\varphi\in C_0^{\infty}(\mathbb{R}^N)$ and let us define 
$$
	a_0(x,\xi) := \displaystyle|\xi|^2 - |\nabla\varphi(x)|^2 
	\quad \hbox{and} \quad 
	b_0(x,\xi) := 2\xi\cdot\nabla\varphi(x).
$$
	Let us also recall that the {\it Poisson bracket} of $a_0$ and $b_0$ is given by
\[
[a_0,b_0] := \nabla _{\xi} a_0\cdot \nabla_x b_0-\nabla _{x} a_0\cdot \nabla_\xi b_0.
\]
	
	A crucial result in the proof of Lemma \ref{st1} is the following: 
\begin{thm}[{{\cite[Proposition $2.3$]{Fabre}}}]\label{ir}
	Let $U\subset\mathbb{R}^N$ be a nonempty bounded open set, $K\subset U$ a nonempty compact set and assume that $\varphi\in C_0^{\infty}(\mathbb{R}^N)$. 
	Suppose that $\varphi$ is  bi-convex in $U$ with respect to the characteristics of $a_0$ and $b_0$, i.e. $\varphi$ satisfies the following:
        \begin{equation} \label{st3}
	\left\{
		\begin{array}{l}
			\nabla\varphi(x)\neq 0\quad \forall x\in U,\\
			\noalign{\smallskip}\dis
			\exists\,C_0 > 0\,\,\mbox{such that}\,\,~[a_0,b_0](x,\xi) \geq C_0\,\,\mbox{whenever}\,\,(x,\xi)\in U\times\mathbb{R}^N~\mbox{and}~\, a_0(x,\xi) = b_0(x,\xi) = 0.
		\end{array}
	\right.
        \end{equation}
	Then, there exist $C_1 > 0$ and $h_1 > 0$ such that, for all $0 < h < h_1$ and any function $u\in H_0^2(K)$, one has:
        \begin{equation*}
	I_0(u):=\displaystyle\int_{K}e^{2\varphi/h}|u|^2\,dx + h^2\displaystyle\int_{K}e^{2\varphi/h}|\nabla u|^2\,dx \leq C_1h^3\displaystyle\int_{K}e^{2\varphi/h}|\Delta u|^2\,dx.\label{mt}
        \end{equation*}
\end{thm}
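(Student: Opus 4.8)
The plan is to derive the conclusion from the Carleman estimate of Theorem~\ref{ir}, applied separately to the two components $y$ and $z$, and to use it to propagate the vanishing of $(y,z)$ radially outward from $B_R(x_0)$. After a translation I may assume $x_0=0$ and write $B_r:=B_r(0)$. Since $a,b,A,B\in L^\infty$ and $(y,z)\in H^1\times H^1$ solve \eqref{mt7}, one has $\Delta y,\Delta z\in L^2_{\mathrm{loc}}(B_{2R})$, hence $y,z\in H^2_{\mathrm{loc}}(B_{2R})$ by interior elliptic regularity; this regularity is what will let me use cut-offs of $y$ and $z$ as admissible test functions in Theorem~\ref{ir}. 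I then set
\[
\rho^*:=\sup\{\rho\in[R,2R]:(y,z)=(0,0)\ \text{a.e. in}\ B_\rho\},
\]
which is $\ge R$ by hypothesis, and note that $(y,z)=(0,0)$ in $B_{\rho^*}$. The goal is to show $\rho^*=2R$; the core of the argument is a one-step propagation showing that $\rho^*<2R$ is impossible.

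So suppose $\rho^*<2R$ and fix $\delta>0$ with $\rho^*+3\delta<2R$. I work on the annulus $A:=\{\rho^*/2<|x|<\rho^*+3\delta\}\subset\subset B_{2R}$, which avoids the origin. On a neighbourhood of $\overline A$ I choose a weight $\varphi\in C_0^\infty(\mathbb{R}^N)$ coinciding there with the H\"ormander-type function $e^{-\lambda|x|^2}$; since $\nabla\varphi\ne0$ on $\overline A$, a standard convexification argument gives, for $\lambda$ large, the bi-convexity \eqref{st3} on an open set $U\supset\overline A$. Note that $\varphi$ is radially decreasing. I then take $\chi\in C_0^\infty(A)$ equal to $1$ on $\{\rho^*/2+\eta\le|x|\le\rho^*+2\delta\}$, so that $\mathrm{supp}\,\nabla\chi$ splits into an inner layer contained in $B_{\rho^*}$ and an outer layer $\{\rho^*+2\delta<|x|<\rho^*+3\delta\}$. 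The geometry is arranged so that on the inner layer $(y,z)$ already vanishes, while on the outer layer $\varphi\le\varphi_0:=\varphi(\rho^*+2\delta)$, which is strictly below $\varphi_1:=\varphi(\rho^*+\delta)$, the minimum of $\varphi$ on the target set $V:=\{\rho^*\le|x|\le\rho^*+\delta\}$ where $\chi\equiv1$.

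Applying Theorem~\ref{ir} with $K=\overline A$ to $u=\chi y$ and to $u=\chi z$, I expand $\Delta(\chi y)=\chi\Delta y+2\nabla\chi\cdot\nabla y+(\Delta\chi)y$ and use the equations in the form $\chi\Delta y=\chi(ay+bz)$ to bound $|\Delta(\chi y)|^2\le C\chi^2(|y|^2+|z|^2)+C\mathbf{1}_{\mathrm{supp}\,\nabla\chi}(|y|^2+|\nabla y|^2)$, with the analogous bound for $z$. The zeroth-order contributions $C_1h^3C\int_K e^{2\varphi/h}\chi^2(|y|^2+|z|^2)$ can then be moved to the left-hand sides once $h$ is small enough that $C_1Ch^3\le\tfrac14$; absorbing the \emph{coupled} pair $(|y|^2,|z|^2)$ in this way works precisely because Theorem~\ref{ir} controls the zeroth-order term with the favourable factor $h^3$. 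Adding the two inequalities leaves
\[
\int_K e^{2\varphi/h}\chi^2\bigl(|y|^2+|z|^2\bigr)\,dx\le C\int_{\mathrm{supp}\,\nabla\chi}e^{2\varphi/h}\bigl(|y|^2+|\nabla y|^2+|z|^2+|\nabla z|^2\bigr)\,dx .
\]
Because $(y,z)$ vanishes on the inner layer, the right-hand side is effectively an integral over the outer layer, where $e^{2\varphi/h}\le e^{2\varphi_0/h}$; restricting the left-hand side to $V$, where $e^{2\varphi/h}\ge e^{2\varphi_1/h}$, gives
\[
\int_V\bigl(|y|^2+|z|^2\bigr)\,dx\le C\,e^{2(\varphi_0-\varphi_1)/h}\,\|(y,z)\|^2_{H^1(B_{2R})}.
\]
Since $\varphi_0<\varphi_1$, sending $h\to0^+$ forces $(y,z)=(0,0)$ on $V$, hence on $B_{\rho^*+\delta}$, contradicting the definition of $\rho^*$. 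Therefore $\rho^*=2R$ and $(y,z)=(0,0)$ in $B_{2R}$.

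I expect the main obstacle to be exactly the coupling: unlike the scalar case, neither component satisfies a closed equation, so $|\Delta(\chi y)|^2$ inevitably involves both $|y|^2$ and $|z|^2$. The estimate of Theorem~\ref{ir} is tailored to this, since its $h^3$ weight on $\int e^{2\varphi/h}|\Delta u|^2$ makes the coupled zeroth-order terms of both equations absorbable simultaneously for small $h$. A secondary technical care is the choice of weight and cut-off ensuring $\nabla\varphi\ne0$ throughout — which forces the annular localization, avoiding the origin where a radial weight would be stationary — and ensuring that the cut-off transition lies either in the already-known vanishing region or where $\varphi$ is strictly smaller than on the target set.
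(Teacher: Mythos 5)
Your proposal does not prove the statement it was assigned. The statement is the Carleman estimate itself: for every $u\in H_0^2(K)$ and every small $h$,
\[
\int_{K}e^{2\varphi/h}|u|^2\,dx + h^2\int_{K}e^{2\varphi/h}|\nabla u|^2\,dx \leq C_1h^3\int_{K}e^{2\varphi/h}|\Delta u|^2\,dx ,
\]
under the bi-convexity hypothesis \eqref{st3}. Your very first sentence announces that you will \emph{derive consequences from} this estimate, and everything that follows is an application of it to the coupled system \eqref{mt7}: cut-offs, absorption of the coupled zeroth-order terms thanks to the $h^3$ factor, comparison of the weight on the inner and outer layers, and the limit $h\to 0$. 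That is a proof of the unique continuation property (Lemma~\ref{st1} in the paper, which the paper proves by essentially the same propagation argument you describe, with a fixed $\varepsilon$-layer in place of your supremum $\rho^*$), not of Theorem~\ref{ir}. As a proof of the stated theorem it is circular: the inequality to be established is taken as a black box.

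A genuine proof of Theorem~\ref{ir} has to go through the Carleman machinery encoded in the hypothesis \eqref{st3}: set $v=e^{\varphi/h}u$, conjugate to get $P_\varphi v = h^2e^{\varphi/h}\Delta(e^{-\varphi/h}v)$, split $P_\varphi=A+B$ into its formally self-adjoint and skew-adjoint parts, whose principal symbols are precisely $a_0(x,\xi)=|\xi|^2-|\nabla\varphi(x)|^2$ and $b_0(x,\xi)=2\xi\cdot\nabla\varphi(x)$, expand $\|P_\varphi v\|^2=\|Av\|^2+\|Bv\|^2+\langle[A,B]v,v\rangle$, and use the positivity of the Poisson bracket $[a_0,b_0]$ on the characteristic set $\{a_0=b_0=0\}$ (together with $\nabla\varphi\neq0$ and a G\aa rding-type argument) to bound the commutator term from below and recover the weighted $L^2$ and gradient terms with the stated powers of $h$. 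None of this appears in your proposal. Note that the paper itself does not reprove this result either --- it quotes it from Fabre --- so if a citation is acceptable your application of it is fine as a proof of Lemma~\ref{st1}; but as a proof of Theorem~\ref{ir} the proposal is empty.
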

\begin{proof} [Proof of Lemma \ref{st1}] 
Without loss of generality we may assume that $x_0=0$. 
Let $(y,z)\in H^1(B_{2R})\times H^1(B_{2R})$ be a solution to \eqref{mt7} such that $y = 0$ and $z = 0$ in $B_R$. 

	We will try to apply Theorem \ref{ir} to the functions $y$ and $z$. To do this, let us fix $\varepsilon > 0$ and let us introduce the sets
\[
K := \left\{x\in\mathbb{R}^N : \frac{3}{4}R\leq |x|\leq 2R - \varepsilon\right\}\quad\hbox{and}\quad U := \left\{x\in\mathbb{R}^N : \frac{1}{2}R< |x|< 2R\right\}  \label{1r2}
\]
	and a function $\varphi\in C_0^{\infty}(\mathbb{R}^N),$ with
        \begin{equation}\label{mt3}
	\varphi(x) := e^{-\delta |x|^2}\,\, \forall x\in\overline{B}_{2R},\ \ \delta > 4/ R^2.
        \end{equation} 
	
	It is not difficult to see that
        \begin{equation}\label{st4}
		\partial_{x_j}\varphi(x) =-2\delta x_j\varphi(x)
		\quad \hbox{and}\quad \partial_{x_j}\partial_{x_k}\varphi(x) = -2\delta\varphi(x)\delta_{jk} + 4\delta^2x_jx_k\varphi(x), 
        \end{equation}
	where the $\delta_{jk}$ are the Kronecker symbols. From \eqref{mt3} and \eqref{st4}, we have that 
        \begin{equation*}
\left.
\begin{alignedat}{2}
[a_0,b_0](x,\xi) 
 = &~64 \, \delta^3\varphi(x)^3|x|^2\left[\delta|x|^2 - 1\right]\\
 		\noalign{\smallskip}\dis
\geq & 16 \, \delta^3R^2e^{-12R^2\delta}\left(\frac{\delta R^2}{4} - 1\right)
\end{alignedat}
\right.
        \end{equation*}
	for any $(x,\xi)\in U\times\mathbb{R}^N$ such that $a_0(x,\xi) = b_0(x,\xi) = 0$.
	Therefore, we see that \eqref{st3} is satisfied by the function $\varphi$ in $U.$

    Let us introduce a cut-off function $\zeta\in C_0^{\infty}(\mathring{K})$ satisfying $\zeta\equiv 1$~for $R - \varepsilon\leq |x|\leq 2R-2\varepsilon$ and let us set
	$\tilde{y} := \zeta y$  and $\tilde{z} := \zeta z$. It is then clear that $(\tilde{y},\tilde{z})\in H_0^2(K)\times H_0^2(K)$.
	After some computations, we obtain that:
\[
	\left\{
		\begin{array}{lll}
			\Delta\tilde{y} = a\tilde{y} + b\tilde{z} + H_1,\\
			\Delta\tilde{z} = A\tilde{y} + B\tilde{z} + H_2,
		\end{array}
	\right.
\]
where 
        \begin{equation} \label{mt1}
H_1 := 2\nabla\zeta\cdot\nabla y + y\Delta\zeta \quad \hbox{and}\quad H_2 := 2\nabla\zeta\cdot\nabla z + z\Delta\zeta.
        \end{equation}

	Consequently, we can apply Theorem \ref{ir} to $\tilde{y}$ and deduce that there exist $C_2> 0$ and $h_2 > 0$ such that
        \begin{equation}\label{ir8}
I_0(\tilde{y}) \leq C_2 h^3 \left( \displaystyle\int_{K}e^{2\varphi/h}|\tilde{z}|^2\,dx + \displaystyle\int_{K}e^{2\varphi/h}|H_1|^2\,dx\right)
        \end{equation}
for all $h\in (0,h_2)$.
	Here, we have absorbed the lower order term for $\tilde y$ from the right hand side by taking $h_2$ small enough.
	Analogously, there exist positive constants $C_3 > 0$ and $h_3 > 0$ such that,
        \begin{equation}\label{ir9}
I_0(\tilde{z}) \leq C_3 h^3 \left(\displaystyle\int_{K}e^{2\varphi/h}|\tilde{y}|^2\,dx
 + \displaystyle\int_{K}e^{2\varphi/h}|H_2|^2\,dx\right)
        \end{equation}
	for all $h\in (0,h_3)$.
	
	Next, adding \eqref{ir8} and \eqref{ir9}, taking $h_4$ sufficiently small and $C_4$ sufficiently large and absorbing again the lower order terms for $\tilde y$ and $\tilde z$ from the right hand side, we have
        \begin{equation}
I_0(\tilde{y}) + I_0(\tilde{z}) \leq C_4 h^3 \displaystyle\int_{K}e^{2\varphi/h}\left(|H_1|^2 + |H_2|^2\right)\,dx, \label{ir11}
        \end{equation}
for all $h\in (0,h_4)$.


	To conclude the proof, we note that \eqref{mt1}, the fact that $y = z = 0$ in $B_R$ and $\nabla\zeta = \Delta\zeta = 0$ for $ R - \varepsilon \leq |x| \leq 2R - 2\varepsilon$ imply that $H_1$ and $H_2$ vanish in $\overline{B}_{2R-2\varepsilon}$. Now, we have from \eqref{mt3} that $\varphi$ is  positive and radially decreasing in $U$. Thus, one has
        \begin{equation} \label{mt4}
\displaystyle\int_{K}e^{2\varphi/h}(|H_1|^2 + |H_2|^2) \,dx \leq e^{2\varphi(2R-2\varepsilon)/h}\displaystyle\int_{K}(|H_1|^2 + |H_2|^2)\,dx.
        \end{equation}
	On the other hand,
        \begin{equation}
	\begin{alignedat}{2}
	\displaystyle\int_{K}e^{2\varphi/h}(|\tilde{y}|^2+|\tilde{z}|^2)\,dx \geq&~ \displaystyle\int_{R\leq |x|\leq 2R - 3\varepsilon}e^{2\varphi/h}(|y|^2+|z|^2)\,dx  \\
	\noalign{\smallskip}\dis
	\geq&~ e^{2\varphi(2R-3\varepsilon)/h}\displaystyle\int_{R\leq |x|\leq 2R - 3\varepsilon}(|y|^2+|z|^2)\,dx. \label{mt5}
	\end{alignedat}
        \end{equation}
It follows from \eqref{ir11}--\eqref{mt5} that
        \begin{equation*}
	 \displaystyle\int_{R\leq |x|\leq 2R - 3\varepsilon}(|y|^2+|z|^2)\,dx \leq C_4 h^3e^{2[\varphi(2R-2\varepsilon)-\varphi(2R-3\varepsilon)]/h}\displaystyle\int_{K}(|H_1|^2 + |H_2|^2)\,dx.
        \end{equation*}
	Since $H_1$ and $H_2$ are independent of $h$ and $\varphi(2R - 2\varepsilon) < \varphi(2R - 3\varepsilon) $, we can let $h\rightarrow 0$ and get that
        \begin{equation*}
y=z = 0\quad \mbox{in}\quad R\leq |x|\leq 2R - 3\varepsilon
        \end{equation*}
and, consequently, $y$ and $z$ vanish in $B_{2R - 3\varepsilon}$. Since $\varepsilon > 0$ is arbitrarily small, we conclude that $y$ and $z$ vanish identically in $B_{2R}$.
\end{proof}

\subsection{Unique continuation for general domains}

	The goal of this section is to prove Theorem \ref{mt0} in the general case.
		
	Let $(y,z)\in H^1(G)\times H^1(G)$ be a solution to \eqref{mt7} satisfying \eqref{mt8} and let us assume that $\overline{B_{\rho_0}(x_0)}\subset\omega$. Let $x_1$ be a point of $G$ and let us see that $y = 0$ and $z = 0$ in a neighborhood of $x_1$.
	
	Since $G$ is connected, there exists a curve $\eta\in C^{\infty}([0,1];G)$ such that  $\eta(0) = x_0$ and $\eta(1) = x_1$. 
	
	Notice that for any $t\in [0,1]$ there exists $r_t > 0$ such that $\overline{B_{2r_t}(\eta(t))}\subset G$. 
	Since $\Gamma:= \eta\left([0,1]\right)$ is a compact set,  there exist~$m\geq 1$ and $0\leq t_1 <\ldots < t_m\leq 1$ satisfying
\[
	\Gamma\subset\bigcup_{j=1}^{m}B_{2r_j}\left(\eta(t_j)\right), \,\mbox{where we have set}\, r_j := r_{t_j}. \label{st}
\]

By construction, setting ~$\rho_1:=\mbox{min}\left\{r_1,\ldots,r_m,\rho_0\right\}$, we have that $\overline{B_{\rho_1}(x)}\subset G$, for all $x\in \Gamma$.

	Finally, we set~$r_0:=\rho_1/2$ and fix $0 < r < r_0$. 
	It is clear that $(y,z)$ vanishes in~$B_r(x_0)$ whence, by Lemma \ref{st1}, $(y,z)$  also vanishes in $B_{2r}(x_0)$. 
	Let $\xi_1: = \eta(\tau_1)\in\partial B_r(x_0)\cap\Gamma$. Then, we have that $(y,z) = (0,0)$ in $B_r(\xi_1)$ and, in view of Lemma \ref{st1}, $(y,z) = (0,0)$ in $B_{2r}(\xi_1)$. 
	Applying the same idea a finite number of times, we obtain $y = 0$ and $z = 0$ in $B_r(x_1).$ This ends the proof.


\subsection{Proof of Theorem 1: uniqueness} 

	Let us introduce the open sets $D:=D^0\cup D^1$ and~$O^0 := \Omega\backslash\overline{D}$ and let $O$ be the unique connected component of $O^0$ 
	such that $\partial\Omega\subset \partial O $. Also, let us set $y := y^0 - y^1$ and $z := z^0 - z^1$ in  $O$.
	Since $\alpha^0 = \alpha^1$ and $\beta^0 = \beta^1$, the couple $(y,z)\in H^1(O)\times H^1(O)$ satisfies:
        \begin{equation}\label{st5}
\left\{
\begin{array}{lll}
-\Delta y + ay + bz = 0&\mbox{in}& O,\\
\noalign{\smallskip}\dis
-\Delta z + Ay + Bz = 0&\mbox{in}& O,\\ 
\noalign{\smallskip}\dis
y =  z = 0&\mbox{on}&\partial \Omega,\\
\noalign{\smallskip}\dis
\frac{\partial y}{\partial n} = \frac{\partial z}{\partial n} = 0&\mbox{on}&\gamma.
\end{array}
\right.
        \end{equation}
	
	Now, we fix $x_0\in\gamma$ and we choose $r > 0$ such that $\overline{B}_r(x_0)\cap\partial \Omega\subset\gamma$.
	Let us set $O' := O\cup B_r(x_0)$ and consider the extension by zero $(\tilde{y},\tilde{z})$ of $(y,z)$ to the whole set $O'$.
	
	From \eqref{st5}, it follows that
        \begin{equation*}
\left\{
\begin{array}{lll}
-\Delta\tilde{y} + a\tilde{y} + b\tilde{z} = 0&\mbox{in}& O',\\
-\Delta\tilde{z} + A\tilde{y} + B\tilde{z} = 0&\mbox{in}& O'.\\
\end{array}
\right.
        \end{equation*}
	Also, since $O'$ is connected and $(\tilde{y},\tilde{z}) = (0,0)$ in $O'\backslash\overline{O}$, Theorem \ref{mt0} implies $(\tilde{y},\tilde{z}) = (0,0)$ in $O'$. 
	In particular, $(y,z) = (0,0)$ in $O$.

	To conclude, let us prove that $D^1\backslash\overline{D}^0$ and $D^0\backslash\overline{D}^1$ must be empty. 
	Thus, let us suppose that $D^1\backslash\overline{D}^0\neq\emptyset$ and let us introduce the set $D^2: = D^1\cup[(\Omega\backslash\overline{D}^0)\cap (\Omega\backslash\overline{O})]$.
	By hypothesis, $D^2\backslash\overline{D}^0$ is nonempty.
	On the other hand, note that $\partial(D^2\backslash\overline{D}^0) := \Gamma_0\cup\Gamma_1$, where $\Gamma_0 = \partial(D^2\backslash\overline{D}^0)\cap\partial D^0$ and $\Gamma_1 = \partial(D^2\backslash\overline{D}^0)\cap\partial D^1$ (see~Figure~1).

	\begin{figure}[h]
\centering
\includegraphics[scale=0.8]{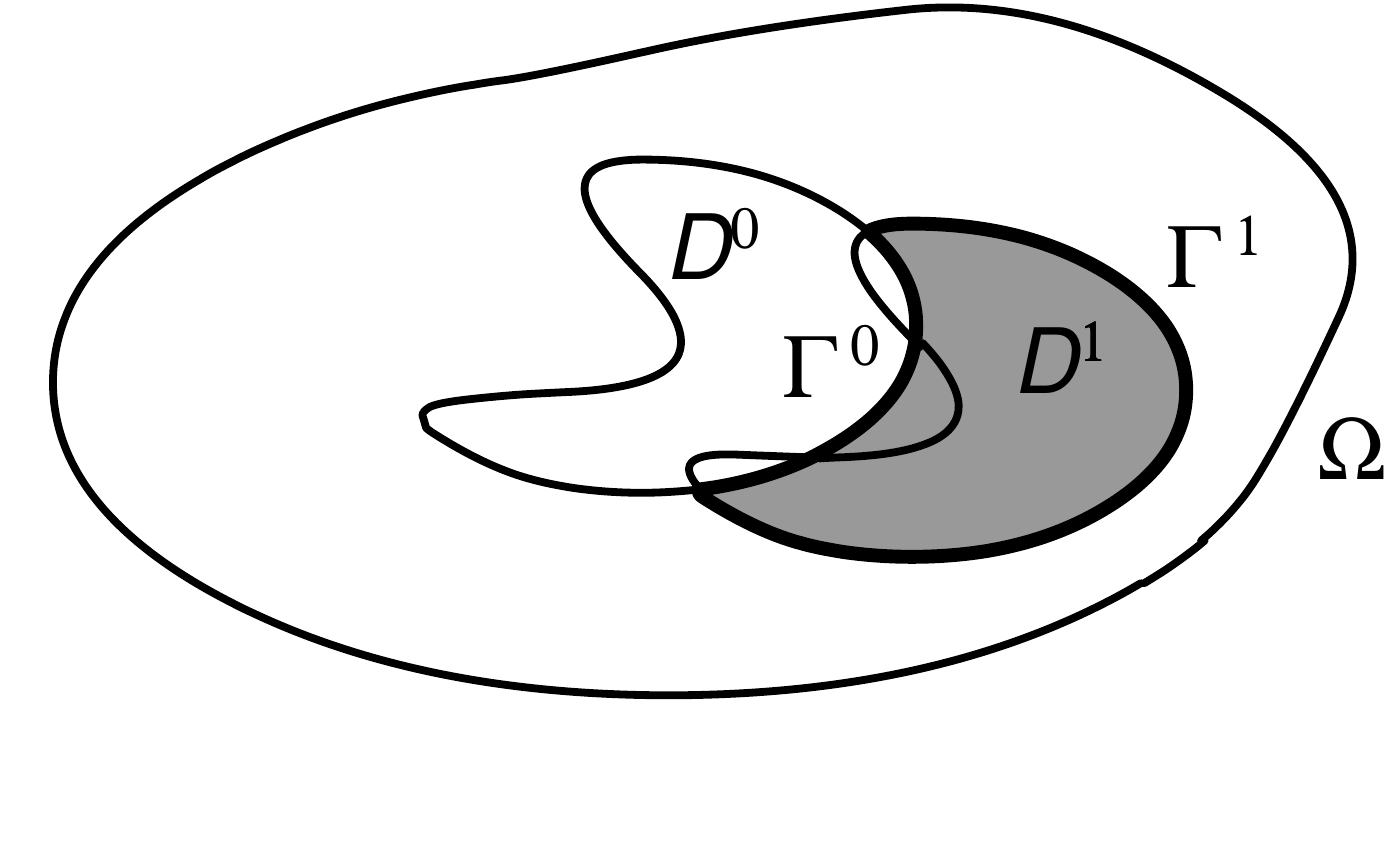}
\vspace{-1cm}
\caption{The filled region is the set $D^2\backslash\overline{D}^0$.}
\end{figure}

	Therefore, since $(y^0,z^0) = (y^1,z^1)$ in $O$, the pair $(y^0,z^0)$ verifies
        \begin{equation}\label{st7}
\left\{
\begin{array}{lll}
-\Delta y^0 + ay^0 + bz^0 = 0&\mbox{in}&D^2\backslash\overline{D}^0,\\
-\Delta z^0 + Ay^0 + Bz^0 = 0&\mbox{in}&D^2\backslash\overline{D}^0,\\ 
y^0 = 0,\, z^0 = 0&\mbox{on}&\Gamma_0,\\
y^0  = 0,\, z^0  = 0&\mbox{on}&\Gamma_1.\\
\end{array}
\right.
        \end{equation}
	Since the linear system \eqref{st7} possesses exactly one solution, we necessarily have $(y^0,z^0) = (0,0)$ in $D^2\backslash\overline{D}^0$. 
	Consequently, in view of Theorem \ref{mt0}, $(y^0,z^0) = (0,0)$ in $\Omega\backslash\overline{D}^0.$ This contradicts the fact that $(\varphi,\psi)$ is not identically zero on $\partial\Omega$.
	Hence, $D^1\backslash\overline{D}^0$ is the empty set.

	Analogously, one can prove that $D^0\backslash\overline{D}^1$ is empty and, finally, one has $D^0 = D^1$.

\section{Stability}\label{stability}

\subsection{Preliminary results}

	Let us introduce some basic notation.
	Let $m = (m^1,\ldots,m^N)\in W^{1,\infty}(\mathbb{R}^N;\mathbb{R}^N)$ be given and let us set
	        \begin{equation*}\label{es}
m':=\left(\dfrac{\partial m^i}{\partial x_j}\right)_{i,j = 1}^{N},\quad \mbox{Jac}(m):=|\det(m')|,
\quad M := ((m')^*)^{-1}. 
	        \end{equation*}

	In the sequel, we will consider the set
	\[
\mathcal{W}_{\epsilon} := \{ \mu \in W^{1,\infty}(\mathbb{R}^N;\mathbb{R}^N) : \|\mu\|_{W^{1,\infty}}\leq \epsilon, \ \ \mu = 0 \ \mbox{ in } \ \Omega\backslash\overline{D}^* \},
	\]
where $0 < \epsilon < 1$.
	We will work with mappings of the form $m := I + \mu$, where $I : \mathbb{R}^N \mapsto \mathbb{R}^N$ is the identity and $\mu \in \mathcal{W}_{\epsilon}$.
	For any $\mu \in \mathcal{W}_{\epsilon}$, $I + \mu$ is obviously bijective, $(I+\mu)^{-1}\in W^{1,\infty}(\mathbb{R}^N;\mathbb{R}^N)$ (see \cite{henrot}, p.~193) and
	\[
(I+\mu)(D) \in \mathcal{D} \quad \forall D \in \mathcal{D}.
	\]
	Also, the corresponding functions $\mbox{Jac}(m)$, $M$ and~$M^{-1}$ satisfy
	        \begin{equation}\label{D3}
\mbox{Jac}(m) \geq C(\epsilon) > 0, \ \ \| M \|_{L^\infty} + \| M^{-1} \|_{L^\infty} \leq C(\epsilon).
	        \end{equation}


	Let $D^0\in\mathcal{D}$ and~$\mu \in\mathcal{W}_{\epsilon}$ be given, let us set again $m := I+\mu$ and~$D^1 = m(D^0)$ and let us consider the solution $(y_1,z_1)\in H^1(\Omega\backslash\overline{D}^1)^2$ to 
	        \begin{equation*}\label{es1}
\left\{
\begin{array}{lll}
-\Delta y_1 + ay_1 + bz_1 = 0&\mbox{in}&\Omega\backslash\overline{D}^1,\\
-\Delta z_1 + Ay_1 + Bz_1 = 0&\mbox{in}&\Omega\backslash\overline{D}^1,\\ 
y_1 = \varphi,\,\, z_1 = \psi &\mbox{on}&\partial\Omega,\\
y_1 = 0,\,\, z_1 = 0&\mbox{on}&\partial D^1.
\end{array}
\right.
	        \end{equation*}

	Since $(\varphi,\psi)\in H^{1/2}(\partial\Omega)^2$, there exists $(\varphi_1,\psi_1)\in H^1(\Omega)^2$ such that
	\[
(\varphi_1,\psi_1) = 0\,\,\mbox{in}\,\,\overline{D^*}\quad\mbox{and}\quad (\varphi_1,\psi_1) = (\varphi,\psi)\,\,\mbox{on}\,\,\partial\Omega.
	\]
	Thus, we can write $(y_1,z_1) = (u_1 + \varphi_1,v_1 + \psi_1)$, where $(u_1,v_1)$ is the solution to 
	        \begin{equation}\label{es2}
\left\{
\begin{array}{lll}
-\Delta u_1 + au_1 + bv_1 = F_1&\mbox{in}&\Omega\backslash\overline{D}^1,\\
-\Delta v_1 + Au_1 + Bv_1 = G_1&\mbox{in}&\Omega\backslash\overline{D}^1,\\ 
u_1 = 0,\,\, v_1 = 0 &\mbox{on}&\partial\Omega\cup\partial D^1
\end{array}
\right.
	        \end{equation}
and
	        \begin{equation*}
F_1 = \Delta\varphi_1 - a\varphi_1 - b\psi_1,\ \ G_1 = \Delta\psi_1 - A\varphi_1 - B\psi_1.
	        \end{equation*}
	We see from \eqref{es2} that for any  $(w,p)\in H_0^1(\Omega\backslash\overline{D}^1)^2$ the following holds:
	        \begin{equation}\label{es3}
\left.
\begin{array}{ll}
\displaystyle\int_{\Omega\backslash\overline{D}^1}(\nabla u_1\cdot\nabla w + \nabla v_1\cdot\nabla p)\,\, dy + \displaystyle\int_{\Omega\backslash\overline{D}^1}(au_1w + bv_1w + Au_1p + Bv_1p)\,\,dy & \\
= - \displaystyle\int_{\Omega\backslash\overline{D}^1}(\nabla \varphi_1\cdot\nabla w + \nabla \psi_1\cdot\nabla p)\,\, dy - \displaystyle\int_{\Omega\backslash\overline{D}^1}(a\varphi_1w + b\psi_1w + A\varphi_1p + B\psi_1p)\,\,dy.&
\end{array}
\right.
	        \end{equation}
	        
	Let us introduce the functions:
	        \begin{equation*}
u_0 := \tilde{m}(u_1),\ \ v_0 := \tilde{m}(u_1), \ \ \varphi_0 := \tilde{m}(\varphi_1),\ \ \psi_0 := \tilde{m}(\psi_1),
	        \end{equation*}
where $\tilde{m}$ is the isomorphism from  $H_0^1(\Omega\backslash\overline{D}^1)^2$ onto~$H_0^1(\Omega\backslash\overline{D}^0)^2$ induced by~$m$, that is,
	        \begin{equation*}
\tilde{m}(f) :=  f\circ m \quad \forall f \in H_0^1(\Omega\backslash\overline{D}^1)^2.
	        \end{equation*}
	Observe that, since $(\varphi_1,\psi_1) = 0$ in~$D^*$ and $m = I$ in~$\Omega\backslash\overline{D}^{*}$, we have $(\varphi_0,\psi_0) = (\varphi_1,\psi_1)$ in~$\Omega$.
	In other words, $(\varphi_1,\psi_1)$ is invariant under the isomorphism $\tilde{m}$ associated to~$m$.

	It can be easily shown that solving the variational problem \eqref{es3} is equivalent to find $(u_0,v_0)\in H_0^1(\Omega\backslash\overline{D}^0)^2$ such that
	        \begin{equation*}
\left.
\begin{array}{ll}
\displaystyle\int_{\Omega\backslash\overline{D}^0}(M\nabla u_0\!\cdot\!M\nabla z \!+\! M\nabla v_0\!\cdot\!M\nabla q)\,\mbox{Jac}(m)\,dx \!+\! \displaystyle\int_{\Omega\backslash\overline{D}^0}(au_0z \!+\! bv_0z \!+\! Au_0q \!+\! Bv_0q)\,\mbox{Jac}(m)\,dx & \\
\ = \!-\! \displaystyle\int_{\Omega\backslash\overline{D}^0}(M\nabla \varphi_0\!\cdot\!\nabla z \!+\! M\nabla \psi_0\!\cdot\!\nabla q)\,\mbox{Jac}(m)\,dx \!-\! \displaystyle\int_{\Omega\backslash\overline{D}^0}(a\varphi_0z \!+\! b\psi_0z \!+\! A\varphi_0q \!+\! B\psi_0q)\,\mbox{Jac}(m)\,dx &
\end{array}
\right.
	        \end{equation*}
for all $(z,q)\in H_0^1(\Omega\backslash\overline{D}^0)$ that is, a solution to the system:
	        \begin{equation}\label{es4}
\left\{
\begin{array}{lll}
-\nabla\cdot (\mbox{Jac}(m)M^*M\nabla u_0) + (au_0 + bv_0)\,\mbox{Jac}(m) = F_0&\mbox{in}&\Omega\backslash\overline{D}^0,\\
-\nabla\cdot  (\mbox{Jac}(m)M^*M\nabla v_0) + (Au_0 + Bv_0)\,\mbox{Jac}(m) = G_0&\mbox{in}&\Omega\backslash\overline{D}^0,\\
u_0 = 0,\,\, v_0 = 0 &\mbox{in} & \partial\Omega\cup\partial D^0,
\end{array}
\right.
	        \end{equation}
where $(F_0,G_0)\in H^{-1}(\Omega\backslash\overline{D}^0)^2$ is given by
	        \begin{equation*}
\left.
\begin{array}{l}
F_0 = \nabla\cdot  (\mbox{Jac}(m)M^*M\nabla \varphi_0) - (a\varphi_0 + b\psi_0)\,\mbox{Jac}(m),\\
G_0 = \nabla\cdot  (\mbox{Jac}(m)M^*M\nabla \psi_0) - (A\varphi_0 + B\psi_0)\,\mbox{Jac}(m).
\end{array}
\right.
	        \end{equation*}

	For convenience, we will rewrite~\eqref{es4} in the abridged form
	        \begin{equation}\label{es5}
\left\{
\begin{array}{lll}
T(u_0,v_0) = (F_0,G_0)&\mbox{in}&\Omega\backslash\overline{D}^0,\\
u_0 = 0,\,\,v_0 = 0 &\mbox{in} &\partial\Omega\cup\partial D^0,
\end{array}
\right.
	        \end{equation}
where the notation is self-explanatory.

\begin{lem}
	The linear operator $T : H_0^1(\Omega\backslash\overline{D}^0)^2 \mapsto H^{-1}(\Omega\backslash\overline{D}^0)^2$ is an isomorphism.
	Furthermore, if $\| \cdot \|_{\mathcal{L}_0}$ denotes the usual norm in $\mathcal{L}(H_0^1(\Omega\backslash\overline{D}^0)^2;H^{-1}(\Omega\backslash\overline{D}^0)^2)$, one has
	\[
\| T \|_{\mathcal{L}_0} + \| T ^{-1} \|_{\mathcal{L}_0} \leq C(\epsilon).
	\]
\end{lem}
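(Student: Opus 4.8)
The plan is to realize $T$ through a bounded, coercive bilinear form and invoke the Lax--Milgram theorem, tracking carefully that every constant depends on $\epsilon$ only.

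First I would record the bilinear form. For $(u_0,v_0),(z,q)\in H_0^1(\Omega\backslash\overline{D}^0)^2$, set
\[
\mathcal{B}((u_0,v_0),(z,q)) := \int_{\Omega\backslash\overline{D}^0}(M\nabla u_0\cdot M\nabla z + M\nabla v_0\cdot M\nabla q)\,\mbox{Jac}(m)\,dx + \int_{\Omega\backslash\overline{D}^0}(au_0z + bv_0z + Au_0q + Bv_0q)\,\mbox{Jac}(m)\,dx,
\]
so that $\langle T(u_0,v_0),(z,q)\rangle = \mathcal{B}((u_0,v_0),(z,q))$. Continuity is immediate: combining the uniform bounds \eqref{D3} on $\mbox{Jac}(m)$ and $M$ with the $L^\infty$ bounds on $a,b,A,B$ yields $|\mathcal{B}((u_0,v_0),(z,q))|\le C(\epsilon)\,\|(u_0,v_0)\|_{H_0^1}\|(z,q)\|_{H_0^1}$, hence $\|T\|_{\mathcal{L}_0}\le C(\epsilon)$ and $T$ is well defined into $H^{-1}(\Omega\backslash\overline{D}^0)^2$.

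The heart of the matter is coercivity. I would not estimate $\mathcal{B}((u_0,v_0),(u_0,v_0))$ directly, because the weight $\mbox{Jac}(m)$ multiplying the (possibly negative) zero-order term would force a \emph{weighted} Poincar\'e inequality whose constant is not obviously compatible with the hypothesis $\lambda\mu_1(\Omega)<1$. Instead I would undo the change of variables that produced \eqref{es4} from \eqref{es3}. Writing $(u_1,v_1):=\tilde m^{-1}(u_0,v_0)=(u_0\circ m^{-1},v_0\circ m^{-1})\in H_0^1(\Omega\backslash\overline{D}^1)^2$ and using $M\nabla_x u_0=(\nabla_y u_1)\circ m$ together with $dy=\mbox{Jac}(m)\,dx$, the substitution gives exactly
\[
\mathcal{B}((u_0,v_0),(u_0,v_0)) = \int_{\Omega\backslash\overline{D}^1}(|\nabla u_1|^2+|\nabla v_1|^2)\,dy + \int_{\Omega\backslash\overline{D}^1}(au_1^2+(b+A)u_1v_1+Bv_1^2)\,dy,
\]
that is, the \emph{un}-transformed Dirichlet form on $\Omega\backslash\overline{D}^1$ (here the constancy of $a,b,A,B$ is convenient). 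Applying \eqref{ha} pointwise with $(\xi_1,\xi_2)=(u_1(y),v_1(y))$ bounds the zero-order integrand below by $-\lambda(u_1^2+v_1^2)$, and the Poincar\'e inequality on $\Omega\backslash\overline{D}^1\subset\Omega$, whose constant does not exceed $\mu_1(\Omega)$, gives
\[
\mathcal{B}((u_0,v_0),(u_0,v_0)) \ge (1-\lambda\mu_1(\Omega))\,\|(u_1,v_1)\|_{H_0^1(\Omega\backslash\overline{D}^1)}^2 .
\]
Since $\lambda\mu_1(\Omega)<1$ this is strictly positive, and transferring back through $\|\tilde m\|,\|\tilde m^{-1}\|\le C(\epsilon)$ yields $\mathcal{B}((u_0,v_0),(u_0,v_0))\ge c(\epsilon)\,\|(u_0,v_0)\|_{H_0^1}^2$ with $c(\epsilon)=C(\epsilon)^{-2}(1-\lambda\mu_1(\Omega))>0$.

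With continuity and coercivity in hand (the form need not be symmetric, which is harmless for Lax--Milgram), the Lax--Milgram theorem shows that $T$ is an isomorphism with $\|T^{-1}\|_{\mathcal{L}_0}\le 1/c(\epsilon)=C(\epsilon)$, which completes the claimed estimate. Equivalently, one may phrase the argument as a factorization: denoting by $T_1$ the (uniformly invertible) operator of the untransformed problem \eqref{es2} on $\Omega\backslash\overline{D}^1$, the identity above shows that $T$ is obtained from $T_1$ by conjugation with $\tilde m$ and its transpose, hence is a composition of isomorphisms each with norm and inverse-norm controlled by $C(\epsilon)$. The only genuine obstacle is the coercivity step, and the change of variables is precisely what keeps the effective Poincar\'e constant equal to $\mu_1(\Omega\backslash\overline{D}^1)\le\mu_1(\Omega)$ and thus compatible with $\lambda<\mu_1(\Omega)^{-1}$; a direct attack on the weighted form would have to reprove this compatibility by hand.
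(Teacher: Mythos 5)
Your proof is correct, and its skeleton (bounded bilinear form plus Lax--Milgram) coincides with the paper's. The one place where you genuinely diverge is the coercivity step: the paper estimates $\langle T(u,v),(u,v)\rangle$ directly on $\Omega\backslash\overline{D}^0$ and asserts the lower bound ``in view of \eqref{D3}'', whereas you pull the quadratic form back to $\Omega\backslash\overline{D}^1$ via $\tilde m$, recover the unweighted Dirichlet form there, and apply \eqref{ha} together with the Poincar\'e inequality on $\Omega\backslash\overline{D}^1\subset\Omega$ before transporting the estimate back with the constants of \eqref{D3}. Your route is the more careful one: a frontal attack on the weighted form yields a lower bound of the type $\bigl(c_1(\epsilon)-\lambda\,C_2(\epsilon)\mu_1(\Omega)\bigr)\|\nabla(u,v)\|_{L^2}^2$, whose positivity is clear only for small $\epsilon$ (when $\mbox{Jac}(m)\approx 1$ and $M\approx I$), while your change of variables shows the form is coercive for every admissible $\mu$ because it is the exact pullback of a form that is coercive by the hypothesis $\lambda<\mu_1(\Omega)^{-1}$. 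So you have not just reproduced the paper's argument but supplied the justification it leaves implicit; the minor cost is that your version leans on the constancy of $a,b,A,B$ (or on composing them with $m^{-1}$), which the paper has in any case assumed in this section.
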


\begin{proof}
	It is easy to see that $T\in\mathcal{L}(H_0^1(\Omega\backslash\overline{D}^0)^2;H^{-1}(\Omega\backslash\overline{D}^0)^2)$ and~$\| T \|_{\mathcal{L}_0} \leq C(\epsilon)$.
	On the other hand, the bilinear form $\tau(\cdot\,,\cdot)$, given by
	\[
\tau(\,(u,v),(z,q)\,) := \langle T(u,v), (z,q) \rangle_{H^{-1},H_0^1} \quad \forall (u,v),(z,q)\in H_0^1(\Omega\backslash\overline{D}^0)^2,
	\]
is coercive in view of~\eqref{D3}.
	Indeed, one has
\begin{equation*}
	\left.
	\begin{array}{rl}
	\langle T(u,v), (u,v) \rangle_{H^{-1},H_0^1} \hspace{-0,3cm}
& = \displaystyle\int_{\Omega\backslash\overline{D}^0} (|M\nabla u |^2 + |M\nabla v |^2)\,\mbox{Jac}(m)\,dx \\
 & + \displaystyle\int_{\Omega\backslash\overline{D}^0}(a|u|^2 + B|v|^2 + (b + A)uv)\mbox{Jac}(m)\,dx \\
 & \geq  C(\epsilon)\| (u,v) \|_{H_0^1}
	\end{array}
	\right.
	\end{equation*}
for all $(u,v) \in H_0^1(\Omega\backslash\overline{D}^0)^2$.

	Therefore, from {\it Lax-Milgram's Lemma,} we get the result.
\end{proof}

\begin{thm}\label{es7}
	The mapping $\mu \mapsto (u_0,v_0)$ is analytic in a neighbourhood of the origin in~$\mathcal{W}_{\epsilon}$.
\end{thm}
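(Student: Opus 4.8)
The plan is to exploit the fixed-domain reformulation \eqref{es5}, which writes $(u_0,v_0) = T^{-1}(F_0,G_0)$ on the \emph{$\mu$-independent} domain $\Omega\backslash\overline{D}^0$, and to show that $\mu\mapsto(u_0,v_0)$ is a composition of analytic maps between Banach spaces. Throughout I would use, without further comment, that a map between Banach spaces is (real-)analytic when it is locally the sum of a convergent series of bounded symmetric multilinear forms, that bounded multilinear maps and polynomials are analytic, and that compositions of analytic maps are analytic. The first step is to analyze the coefficients. Writing $m' = I + \mu'$, the assignment $\mu\mapsto m'$ is affine, hence analytic, from a neighbourhood of the origin into $L^\infty(\R^N;\R^{N\times N})$. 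Since $\|\mu'\|_{L^\infty}\le\epsilon<1$, the pointwise Neumann series $(I+\mu')^{-1}=\sum_{k\ge 0}(-\mu')^k$ converges in $L^\infty$ (uniformly for $\|\mu'\|_{L^\infty}\le\epsilon$), so $\mu\mapsto M=((m')^*)^{-1}$ is given by a convergent power series and is analytic into $L^\infty$; consequently $\mu\mapsto M^*M$ is analytic as well. The determinant $\det(m')=\det(I+\mu')$ is a polynomial of degree $N$ in the entries of $\mu'$, hence analytic, and on a sufficiently small neighbourhood of the origin one has $\det(m')>0$, so there $\mbox{Jac}(m)=\det(m')$ and $\mu\mapsto\mbox{Jac}(m)$ is analytic into $L^\infty$. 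Thus the products $\mbox{Jac}(m)\,M^*M$ and $\mbox{Jac}(m)$ are analytic into $L^\infty$.

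Next I would transfer this to the operator and to the data. The assembly of a coefficient into the corresponding differential operator is bounded and linear: the map $\Theta\mapsto\bigl(u\mapsto-\nabla\cdot(\Theta\nabla u)\bigr)$ is bounded linear from $L^\infty(\R^N;\R^{N\times N})$ into $\mathcal{L}(H_0^1;H^{-1})$, and $c\mapsto\bigl(u\mapsto cu\bigr)$ is bounded linear from $L^\infty$ into $\mathcal{L}(H_0^1;H^{-1})$. Composing these bounded linear maps with the analytic coefficient maps above shows that $\mu\mapsto T$ is analytic into $\mathcal{L}(H_0^1(\Omega\backslash\overline{D}^0)^2;H^{-1}(\Omega\backslash\overline{D}^0)^2)$. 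For the right-hand side, recall that since $(\varphi_1,\psi_1)=0$ in $D^*$ and $m=I$ in $\Omega\backslash\overline{D}^*$ one has $(\varphi_0,\psi_0)=(\varphi_1,\psi_1)$, so the data $(\varphi_0,\psi_0)$ are \emph{independent of} $\mu$. Hence $(F_0,G_0)$ depends on $\mu$ only through the analytic coefficients applied to the fixed fields $(\varphi_1,\psi_1)$, and the same bounded-linear-assembly argument yields that $\mu\mapsto(F_0,G_0)$ is analytic into $H^{-1}(\Omega\backslash\overline{D}^0)^2$.

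Finally I would invoke analyticity of operator inversion. The map $S\mapsto S^{-1}$ is analytic on the open set of invertible operators, since near any invertible $S_0$ one has the convergent expansion $S^{-1}=\sum_{k\ge 0}\bigl(-S_0^{-1}(S-S_0)\bigr)^k S_0^{-1}$. By the preceding Lemma, $T=T(\mu)$ is invertible for every $\mu\in\mathcal{W}_\epsilon$, with $\|T^{-1}\|_{\mathcal{L}_0}\le C(\epsilon)$ uniformly, so $\mu\mapsto T(\mu)$ takes values in the domain of the inversion map and $\mu\mapsto T(\mu)^{-1}$ is analytic. Since the evaluation $(S,f)\mapsto Sf$ is bounded bilinear, the composition
\[
\mu\ \longmapsto\ \bigl(T(\mu)^{-1},(F_0,G_0)\bigr)\ \longmapsto\ T(\mu)^{-1}(F_0,G_0)=(u_0,v_0)
\]
is analytic in a neighbourhood of the origin, which is the claim.

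The main obstacle I anticipate is bookkeeping rather than conceptual: one must check that the series defining $M$, $T$ and $(F_0,G_0)$ converge in the \emph{correct} norms (namely $L^\infty$ for the coefficients, $\mathcal{L}(H_0^1;H^{-1})$ for the operator, $H^{-1}$ for the data) and that the radii of convergence stay bounded away from zero as the base point varies, so that analyticity holds on a genuine neighbourhood of the origin. The only delicate point is the absolute value in $\mbox{Jac}(m)=|\det(m')|$, which is resolved simply by shrinking the neighbourhood so that $\det(m')$ remains positive.
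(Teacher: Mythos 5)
Your proposal is correct and follows essentially the same route as the paper: reformulate on the fixed domain $\Omega\backslash\overline{D}^0$, write $(u_0,v_0)=T^{-1}(F_0,G_0)$, and combine analyticity of $\mu\mapsto T$ with analyticity of operator inversion (the paper simply cites Henrot--Pierre and Simon for the analyticity of $\mu\mapsto T$, whereas you unpack it via the Neumann series and the polynomial determinant). The only cosmetic difference is that the paper observes that $(F_0,G_0)$ is actually \emph{independent} of $\mu$ (since $(\varphi_0,\psi_0)$ vanishes wherever $\mu\neq 0$), while you only use its analyticity, which suffices equally well.
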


\begin{proof}
	We note first that $(F_0,G_0)$ does not depend of $\mu$, because $(\varphi_0,\psi_0) = 0$ where $\mu\neq 0$. 
	Then, since the mapping $T$ is a isomorphism we have by \eqref{es5} that 
	        \begin{equation*}\label{es6}
(u_0,v_0) = T^{-1}(F_0,G_0).
	        \end{equation*}
	
	From the results in~\cite{henrot, simon}, we know that the mapping $\mu \mapsto T$ is analytic in a neighbourhood of~$0$.
	Consequently, this is also the case for $\mu \mapsto (u_0,v_0)$ and the proof is done.
\end{proof}

\subsection{Proof of Theorem 2: stability}

%

	Let $D^0\in\mathcal{D}$ and $\mu \in W_{\epsilon}$ be given, with $\mu \neq 0$ in $D^0$.
	Recall that, in~Theorem~2, for any $\sigma \in (-1,1)$, we have set $m_\sigma := I + \sigma \mu $ and~$D^\sigma := m_\sigma(D^0)$ and~$(y_\sigma,z_\sigma)\in H^1(\Omega\backslash \overline{D}_\sigma)^2$ is the solution to
        \begin{equation} \label{pb-Dsigma}
\left\{
\begin{array}{lll}
-\Delta y_\sigma + ay_\sigma + bz_\sigma = 0&\mbox{in}&\Omega\backslash \overline{D}_\sigma,\\
-\Delta z_\sigma + Ay_\sigma + Bz_\sigma = 0&\mbox{in}&\Omega\backslash \overline{D}_\sigma,\\ 
y_\sigma = \varphi,\,\, z_\sigma = \psi &\mbox{on}&\partial\Omega,\\
y_\sigma = 0,\,\, z_\sigma = 0&\mbox{on}&\partial D^\sigma.
\end{array}
\right.
        \end{equation}

	We will argue as in~\cite{Alvarez}:
	
\begin{enumerate}
	
\item	First, it follows from Theorem~\ref{es7} and the fact that $\mu \equiv 0$ in~$\Omega\backslash\overline{D}^*$ that there exists $\sigma_0 > 0$ such that the mapping in~\eqref{msigma} is well defined and analytic in~$(-\sigma_0,\sigma_0)$.
	Hence, there exist $F_1, F_2, \dots$ in~$H^{-1/2}(\gamma)^2$ such that
	\begin{equation} \label{instead}
\left(\frac{\partial y_\sigma}{\partial n},\frac{\partial z_\sigma}{\partial n}\right) - \left(\frac{\partial y_0}{\partial n},\frac{\partial z_0}{\partial n}\right) = \displaystyle\sum_{j=1}^{\infty}\sigma^j F_j \quad \forall \sigma \in (-\sigma_0,\sigma_0),
	\end{equation}
where the series converges in~$H^{-1/2}(\gamma)^2$.

\item Now, let us assume that $m_\sigma(D^0) \not= D^0$ for some $\sigma \in (-\sigma_0,\sigma_0)$.
	In view of~Theorem~1, not all the $F_j$ can be zero.
	Let $k$ be the smallest $j$ such that $F_j \not= 0$.
	It is then clear that there exists $\sigma_* \in (0,\sigma_0)$ such that
	        \begin{equation*}\label{es8}
\left\|\displaystyle\sum_{j = k + 1}^{\infty}\sigma^{j}F_j\right\|_{H^{-1/2}} \leq \frac{1}{2}|\sigma|^{k}\|F_{k}\|_{H^{-1/2}} \quad \forall \sigma \in (-\sigma_*,\sigma_*).
	        \end{equation*}
	Accordingly, for these $\sigma$, one must also have
	\[
|\sigma|^{k} \|F_{k}\|_{H^{-1/2}} \leq 
\left\|\left(\frac{\partial y_{\sigma}}{\partial n},\frac{\partial z_{\sigma}}{\partial n}\right) 
- \left(\frac{\partial y_0}{\partial n},\frac{\partial z_0}{\partial n}\right)\right\|_{H^{-1/2}} 
+ \frac{1}{2} |\sigma|^{k}\|F_{k}\|_{H^{-1/2}} ,
\]
which allows to achieve the proof.

\end{enumerate}

\section{Additional comments and questions}\label{sec5}

\subsection{A similar inverse problem with internal observation}

	Let $\omega\subset\subset\Omega\backslash \overline{D}^*$ be a nonempty open set.
	Consider the following geometric inverse problem, where the observation is performed on $\omega$:

\begin{quote}{\it 
	Given $(\varphi,\psi)\in H^{1/2}(\partial\Omega)\times H^{1/2}(\partial\Omega)$ and $\alpha\in H^1(\omega)$, find a set $D\in\mathcal{D}$ such that the solution $(y,z)$ to the linear system~\eqref{an} satisfies the following additional condition:
        \begin{equation}\label{st9}
	y\big|_{\omega} = \alpha.
        \end{equation}}
\end{quote}
	
	We have the following uniqueness result: 
	
\begin{thm} \label{an5}
	Assume that $(\varphi,\psi)\in H^{1/2}(\partial\Omega)\times H^{1/2}(\partial\Omega)$ is nonzero and suppose that there exists a nonempty open set $\omega_0\subset\omega$ such that $b\neq 0$ a.e.~in $\omega_0$.
	Let~$(y^i,z^i)$ be the unique weak solution to~\eqref{an} with $D$ replaced by~$D^i$ for $i =0,1$ and let $\alpha^i$ be given by the corresponding equality~\eqref{st9}.
	Then, one has:
	$$
		\alpha^0 = \alpha^1 \quad\Longrightarrow\quad D^0 = D^1.
	$$
\end{thm}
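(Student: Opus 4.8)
The plan is to follow the proof of Theorem~\ref{an4} almost verbatim, the only genuinely new ingredient being that an interior observation controls only~$y$, not the full pair~$(y,z)$; the hypothesis $b\neq 0$ is precisely what lets me recover information on~$z$ near the observation set. First I would set $y := y^0 - y^1$ and $z := z^0 - z^1$ and work on the connected component $O$ of $\Omega\setminus\overline{D^0\cup D^1}$ whose boundary contains~$\partial\Omega$. On $O$ both pairs solve the same system, so $(y,z)\in H^1(O)^2$ solves the homogeneous version of~\eqref{an} in~$O$ with $y=z=0$ on~$\partial\Omega$. Since $\omega\subset\subset\Omega\setminus\overline{D}^*$ and $D^i\subset D^*$, the set $\omega$ lies in $O$ and both $y^0,y^1$ are defined there, so from $\alpha^0=\alpha^1$ I obtain $y=0$ in~$\omega$.

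The crucial step is to upgrade ``$y=0$ in $\omega$'' to ``$(y,z)=(0,0)$ in $\omega_0$'' using the first equation. Because $y$ vanishes identically on the open set $\omega$, all its distributional derivatives vanish there, so $-\Delta y + a y = 0$ in $\omega_0\subset\omega$. The first equation $-\Delta y + a y + b z = 0$ then forces $bz = 0$ in $\omega_0$; since $b\in L^\infty$ and $z\in L^2$ this is an identity in $L^2(\omega_0)$, and the hypothesis $b\neq 0$ a.e.\ in $\omega_0$ yields $z=0$ a.e.\ in~$\omega_0$. Thus $(y,z)=(0,0)$ on the nonempty open set~$\omega_0$.

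From this point the argument reproduces the proof of Theorem~\ref{an4} exactly. As $O$ is connected and $(y,z)$ vanishes on $\omega_0$, the unique continuation property (Theorem~\ref{mt0}) gives $(y,z)=(0,0)$ in $O$, i.e.\ $(y^0,z^0)=(y^1,z^1)$ in~$O$. Arguing by contradiction, if $D^1\setminus\overline{D}^0\neq\emptyset$ I introduce the auxiliary set $D^2$ as before, note that $(y^0,z^0)$ solves a homogeneous, uniquely solvable system on $D^2\setminus\overline{D}^0$ with vanishing data on $\Gamma_0\cup\Gamma_1$, hence vanishes there, and then by Theorem~\ref{mt0} vanishes on all of $\Omega\setminus\overline{D}^0$, contradicting $(\varphi,\psi)\neq 0$. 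The symmetric argument discards $D^0\setminus\overline{D}^1$, giving $D^0=D^1$.

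The main obstacle is not conceptual but a matter of careful bookkeeping in the new step: one must check that $\omega\subset O$ (which rests on $\Omega\setminus\overline{D}^*$ being connected and containing both $\partial\Omega$ and $\omega$, so that it is contained in the distinguished component), and one must justify the passage $-\Delta y + ay = 0$ in $\omega_0$ rigorously at the level of distributions so that the pointwise conclusion $z=0$ a.e.\ from $bz=0$ is legitimate. Everything after the production of the zero set $\omega_0$ for $(y,z)$ is a literal repetition of the uniqueness proof already carried out.
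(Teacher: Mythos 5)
Your proposal is correct and follows essentially the same route as the paper: reduce to the homogeneous system on the distinguished component $O$, use the observation together with $b\neq 0$ a.e.\ in $\omega_0$ to get $(y,z)=(0,0)$ on $\omega_0$, invoke Theorem~\ref{mt0}, and then repeat the contradiction argument of Theorem~\ref{an4}. Your explicit derivation of $z=0$ in $\omega_0$ from $bz=0$ (via $y\equiv 0$ on $\omega$ killing $\Delta y$ there) is exactly the step the paper compresses into ``using the facts that $\alpha^0=\alpha^1$ and $b\neq 0$ a.e.\ in $\omega_0$,'' so spelling it out is a welcome addition rather than a deviation.
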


\begin{proof}
	The proof is very similar to the proof of Theorem~\ref{an4}.
	
	As before, we can consider the open sets $D:=D^0\cup D^1$, $O^0 := \Omega\backslash\overline{D}$ and the unique connected component $O$ of~$O^0$ such that~$\partial \Omega \subset \partial O$.
	Again, let us set $y := y^0 - y^1$ and $z := z^0 - z^1$ in $O$.
	Then, using the facts that $\alpha^0 = \alpha^1$ and $b \neq 0$ a.e.~in~$\omega_0$, we have that $(y,z)\in H^1(O)\times H^1(O)$ and satisfies
        \begin{equation*}
\left\{
\begin{array}{lll}
	-\Delta y + ay + bz = 0&\mbox{in}& O,\\
	-\Delta z + Ay + Bz = 0&\mbox{in}& O \\
\end{array}
\right.
        \end{equation*}
and
        \begin{equation*}
y = z = 0 \ \mbox{ in } \ \omega_0.
        \end{equation*}

	Consequently, Theorem \ref{mt0} guarantees that $(y, z) = (0,0)$ in $O$. 
	Arguing  as in the proof of Theorem~\ref{an4}, we deduce that $D^0\backslash\overline{D}^1$ and~$D^1\backslash\overline{D}^0$ are empty sets and, consequently, $D^0 = D^1$.
\end{proof}

	We also have a stability result similar to~Theorem~\ref{stab}.
	Thus, let us fix $D^0\in\mathcal{D}$ and $\mu \in W_{\epsilon}$ with $\mu \neq 0$ in $D^0$, let us take $m_\sigma = I + \sigma \mu $ and~$D^\sigma = m_\sigma(D^0)$ and let~$(y_\sigma,z_\sigma)$ be the solution to~\eqref{pb-Dsigma}.
	The following holds:
	
\begin{thm} \label{stab-in}
	Under the assumptions in~Theorem~\ref{an5} on~$(\varphi,\psi)$ and~$b$, there exists $\sigma_0 > 0$ with the following properties:
	
\begin{enumerate}

\item The mapping
	        \begin{equation}\label{Msigma}
\sigma \mapsto y_\sigma\big|_{\omega}
	        \end{equation}
is well defined and analytic in~$(-\sigma_0,\sigma_0)$, with values in~$L^2(\omega)$.

\item Either $m_\sigma(D^0) = D^0$ for all $\sigma \in (-\sigma_0,\sigma_0)$ (and then the mapping in~\eqref{Msigma} is constant), or there exist $\sigma_* \in (0,\sigma_0)$, $C > 0$ and~$k \geq 1$ (an integer) such that
	\[
\left\| (y_\sigma - y_0)\big|_{\omega} \right\|_{L^2}
\geq C|\sigma|^k \quad \forall \sigma \in (-\sigma_*,\sigma_*).
	\]
\end{enumerate}

\end{thm}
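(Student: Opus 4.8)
The plan is to follow, \emph{mutatis mutandis}, the two-step scheme of the proof of Theorem~\ref{stab}, replacing the boundary observation on $\gamma$ by the internal observation on $\omega$ and invoking the uniqueness result of Theorem~\ref{an5} in place of Theorem~\ref{an4}.

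First I would establish the analyticity asserted in part~1. The key observation is that, since $\omega\subset\subset\Omega\backslash\overline{D}^*$ and every admissible field vanishes on $\Omega\backslash\overline{D}^*$, one has $\sigma\mu\equiv0$, and hence $m_\sigma\equiv I$, on a neighbourhood of $\overline{\omega}$ for all $\sigma$. Consequently the isomorphism $\tilde m_\sigma$ of Section~\ref{stability} acts as the identity on $\omega$, so that, writing $(y_\sigma,z_\sigma)=(u_0+\varphi_1,v_0+\psi_1)$ in the transported fixed-domain formulation, we obtain $y_\sigma\big|_\omega=(u_0+\varphi_1)\big|_\omega$ (note $\omega\subset\Omega\backslash\overline{D}^0$ because $\overline{D}^0\subset D^*$). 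Since $\varphi_1$ does not depend on $\sigma$ and, by Theorem~\ref{es7}, the map $\mu\mapsto(u_0,v_0)$ is analytic from a neighbourhood of the origin in $\mathcal{W}_\epsilon$ into $H_0^1(\Omega\backslash\overline{D}^0)^2$, composing with the line $\sigma\mapsto\sigma\mu$ and with the continuous linear restriction $H_0^1(\Omega\backslash\overline{D}^0)\to L^2(\omega)$ shows that $\sigma\mapsto y_\sigma\big|_\omega$ is well defined and analytic on some $(-\sigma_0,\sigma_0)$ with values in $L^2(\omega)$. In particular there exist $f_1,f_2,\dots\in L^2(\omega)$ with
\[
(y_\sigma-y_0)\big|_\omega=\sum_{j=1}^\infty\sigma^j f_j\qquad\forall\,\sigma\in(-\sigma_0,\sigma_0),
\]
the series converging in $L^2(\omega)$.

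Next I would prove part~2 by the same dichotomy. If $m_\sigma(D^0)=D^0$ for all $\sigma\in(-\sigma_0,\sigma_0)$, then $(y_\sigma,z_\sigma)=(y_0,z_0)$ and the map \eqref{Msigma} is constant. Otherwise there is $\sigma_1$ with $D^{\sigma_1}\neq D^0$, and the contrapositive of Theorem~\ref{an5} forces $y_{\sigma_1}\big|_\omega\neq y_0\big|_\omega$; hence the series above is not identically zero and not all $f_j$ vanish. Let $k\geq1$ be the smallest index with $f_k\neq0$. Since the tail is $o(|\sigma|^k)$, I can pick $\sigma_*\in(0,\sigma_0)$ so small that $\big\|\sum_{j\geq k+1}\sigma^j f_j\big\|_{L^2}\leq\tfrac12|\sigma|^k\|f_k\|_{L^2}$ for $|\sigma|<\sigma_*$, whence
\[
\big\|(y_\sigma-y_0)\big|_\omega\big\|_{L^2}\geq|\sigma|^k\|f_k\|_{L^2}-\tfrac12|\sigma|^k\|f_k\|_{L^2}=\tfrac12|\sigma|^k\|f_k\|_{L^2},
\]
which yields the claim with $C=\tfrac12\|f_k\|_{L^2}$.

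The only genuinely delicate point, exactly as in Theorem~\ref{stab}, is the non-degeneracy step: guaranteeing that \emph{some} coefficient $f_j$ is nonzero whenever $D^\sigma\not\equiv D^0$. This is where uniqueness is indispensable, and the extra hypothesis $b\neq0$ on $\omega_0$ is essential here: it is precisely what lets one deduce, from $y_\sigma\big|_\omega=y_0\big|_\omega$, that both components of the difference $(y_\sigma-y_0,\,z_\sigma-z_0)$ vanish on $\omega_0$, so that the unique continuation property of Theorem~\ref{mt0} can be invoked as in the proof of Theorem~\ref{an5}. The analyticity transfer, by contrast, is routine once one observes that $m_\sigma$ restricts to the identity on $\omega$.
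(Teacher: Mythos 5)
Your proposal is correct and follows exactly the route the paper intends: the paper's own proof of Theorem~\ref{stab-in} simply defers to the two-step argument of Theorem~\ref{stab} (analyticity via Theorem~\ref{es7} plus the fact that $m_\sigma=I$ near the observation region, then the power-series/non-degeneracy dichotomy using the uniqueness theorem), with Theorem~\ref{an5} replacing Theorem~\ref{an4}. Your write-up supplies the details the paper omits, and does so correctly.
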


\begin{proof}
	Again, the proof is very similar to the proof of stability in the boundary observation case (Theorem~\ref{stab}).
	In fact, the unique difference appears in the last part of the argument, when we write
	\[
(y_\sigma - y_0)\big|_{\omega} = \displaystyle\sum_{j=1}^{\infty}\sigma^j F_j \quad \forall \sigma \in (-\sigma_0,\sigma_0),
	\]
instead of~\eqref{instead}.

	For brevity, we omit the details.
\end{proof}

\begin{rmk}{\rm
	Recall that, in the case of problem $\eqref{an}$--$\eqref{an1}$, we need two boundary observations, the normal derivatives of~$y$ and~$z$ on~$\gamma$, to deduce uniqueness and stability.
	The last two results show that, with internal observations, this holds with the information supplied by just one variable. \Fin}
\end{rmk}


\subsection{A geometric inverse problem for a parabolic system}

	Let us present some ideas that allow to extend Theorems~\ref{an4} and~\ref{an5} to time-dependent parabolic systems.
	For brevity, we will only consider the boundary observation case.
	Thus, let $T > 0$ be given and let us consider the following inverse problem:
	
\begin{quote}
	{\it Given $(\varphi,\psi)$ and $(\alpha,\beta)$ in appropriate spaces and a nonempty open set $\gamma\subset\partial\Omega$, find an open set $D\in\mathcal{D}$ 
	such that the solution $(y,z)$ to the linear evolution system:}
        \begin{equation}\label{an6}
\left\{
\begin{array}{lll}
	 y_t - \Delta y + ay + bz = 0&\mbox{in}&\Omega\backslash\overline{D}\times(0,T),\\
	 z_t - \Delta z + Ay + Bz = 0&\mbox{in}&\Omega\backslash\overline{D}\times(0,T),\\ 
	y = \varphi,\,\, z = \psi &\mbox{on}&\partial\Omega\times(0,T),\\
	y = 0,\,\, z = 0&\mbox{on}&\partial D\times(0,T),\\
	y(\cdot\,,0) = 0,\,\, z(\cdot\,,0) = 0&\mbox{in}&\Omega\backslash\overline{D},
\end{array}
\right.
        \end{equation}
	{\it satisfies the additional conditions:}
        \begin{equation}
	\frac{\partial y}{\partial n}\bigg|_{\gamma\times (0,T)} = \alpha\quad\mbox{and}\quad\frac{\partial z}{\partial n}\bigg|_{\gamma\times (0,T)} = \beta.\label{an7}
        \end{equation}
\end{quote}
	
	If  one assume that $(\varphi,\psi)\not\equiv (0,0)$, then arguments similar to those in the proof of Theorem~\ref{an4} can be used to deduce uniqueness for~\eqref{an6}--\eqref{an7}.
	
	Indeed, the first step is to deduce a unique continuation property:
	
\begin{prop}\label{thm2}
	Let $G\subset\mathbb{R}^N$ be a bounded domain whose boundary is of class~$W^{2,\infty}$ and let us set $Q := G\times(0,T)$. 
	Suppose that $a, b, A, B\in L^{\infty}(Q)$ and let $O$ be a nonempty open subset of $Q$.
	Then, any solution $(y,z)\in L^2(0,T;H^2(G)\times H^2(G))$ to
        \begin{equation}\label{an8}
\left\{
\begin{array}{lll}
 y_t - \Delta y + ay + bz = 0&\mbox{in}& Q,\\ 
 z_t - \Delta z + Ay + Bz = 0&\mbox{in}& Q.
\end{array}
\right.
        \end{equation}
	satisfies the following property:
	\[
(y,z) = (0,0) \ \hbox{ in } \ O \  \Longrightarrow \ (y,z) = (0,0) \ \hbox{ in } \ C(O),
	\]
	where $C(O)$ is the horizontal component of $O$, defined by
        \begin{equation*}
C(O) := \{(x,t)\in Q:\, \exists\, x_0\,\,\mbox{such that}\,\, (x_0,t)\in O\}.
        \end{equation*}
\end{prop}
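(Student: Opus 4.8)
The plan is to mirror the two-stage strategy used for the elliptic operator in Theorem~\ref{mt0}: first a local unique continuation statement in space-time cylinders, derived from a Carleman estimate adapted to the parabolic operator $\partial_t-\Delta$, and then a covering argument that propagates the vanishing horizontally, i.e.\ across the spatial variable at fixed time levels.

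First I would reduce to a local claim. Fix $(x_0,t_0)\in O$; since $O$ is open there exist $\rho,\tau>0$ with $B_\rho(x_0)\times(t_0-\tau,t_0+\tau)\subset O$, and it suffices to show that $(y,z)$ vanishes on $G\times(t_0-\tau,t_0+\tau)$, since every point of $C(O)$ lies in such a slab. The regularity $(y,z)\in L^2(0,T;H^2(G)^2)$ combined with the equations gives $(y_t,z_t)\in L^2(Q)^2$, so traces on time slices are meaningful and the integrations by parts below are justified.

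The core is a parabolic analogue of Theorem~\ref{ir}. I would take a cut-off $\zeta$ in space-time, supported in $B_{2R}(x_0)\times J$ and equal to $1$ on $B_R(x_0)\times J'$, set $\tilde y:=\zeta y$ and $\tilde z:=\zeta z$, and apply a Carleman estimate of the schematic form
\[
\int e^{2s\varphi}\bigl(s|u|^2+s^{-1}|\nabla u|^2\bigr)\,dx\,dt \;\leq\; C\int e^{2s\varphi}\,|(\partial_t-\Delta)u|^2\,dx\,dt
\]
to $\tilde y$ and $\tilde z$, for a weight $\varphi=\varphi(x,t)$ whose space-time level sets are non-characteristic (space-like) for $\partial_t-\Delta$; a convenient choice is a Fabre-type spatial quadratic weight combined with a suitable convex function of $t$, so that the parabolic pseudo-convexity condition holds. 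Using the equations, $(\partial_t-\Delta)\tilde y=-a\tilde y-b\tilde z+H_1$ with $H_1$ supported where $\zeta$ is non-constant, and symmetrically for $\tilde z$; adding the two inequalities and taking $s$ large absorbs the zeroth-order and coupling terms $a\tilde y,b\tilde z,A\tilde y,B\tilde z$ into the left-hand side, exactly as in Lemma~\ref{st1}. Since $H_1,H_2$ are supported away from $B_R(x_0)$, where $\varphi$ is strictly smaller, letting $s\to\infty$ forces $(y,z)=(0,0)$ on a slightly larger spatial ball over a slightly shorter time interval.

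Finally I would propagate. For the fixed slab, the connectedness of $G$ lets me join $x_0$ to an arbitrary $x_1\in G$ by a curve, cover it by finitely many balls of comparable radius, and iterate the local step a finite number of times precisely as in the proof of Theorem~\ref{mt0}; carried out over the whole interval $(t_0-\tau,t_0+\tau)$ this yields $(y,z)=(0,0)$ on $G\times(t_0-\tau,t_0+\tau)$, and hence on all of $C(O)$. The main obstacle is the Carleman estimate itself: unlike the elliptic case, where Fabre's Proposition~2.3 (Theorem~\ref{ir}) can be quoted directly, here one must either invoke a parabolic unique continuation estimate from the literature or build the weight by hand and check the correct non-characteristic pseudo-convexity — the slices $\{t=\text{const}\}$ being characteristic for $\partial_t-\Delta$, the propagation is purely horizontal, which is exactly why the conclusion is phrased in terms of $C(O)$ rather than a full space-time neighbourhood. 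Keeping the constants uniform along the covering chain and controlling the endpoints of the time interval, where the Carleman localization costs a little, are the remaining technical points.
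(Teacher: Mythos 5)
Your outline matches the paper's intended argument: the paper gives no details here and simply states that the proof is similar to that of Theorem~1.4 in~\cite{Fabre}, which is precisely the combination you describe --- a local Carleman estimate for $\partial_t-\Delta$ with a Fabre-type weight applied to cut-off versions of $y$ and $z$, absorption of the zeroth-order coupling terms for a large parameter, and purely horizontal propagation along a chain of balls at fixed time levels, which is why the conclusion only reaches $C(O)$. The one caveat is that, exactly like the paper, you defer the parabolic Carleman inequality itself to the literature rather than establishing it, so your proposal is no less complete than the original.
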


	The proof of this result is similar to the proof of Theorem~1.4~in~\cite{Fabre}.

\begin{rmk}\label{cor:bound}{\rm
	Let $\Sigma_0$ be an open nonempty subset of $\partial\Omega\times(0,T)$.
	Then, any solution~$(y,z)\in L^2(0,T;H^2(G)\times H^2(G))$ to \eqref{an8} satisfies the following property:
	\[
(y,z) = (0,0) \ \text{ and } \ \left(\frac{\partial y}{\partial n},\frac{\partial z}{\partial n}\right) = (0,0) \ \mbox{ on } \ \Sigma_0 \ \Longrightarrow \ (y,z) = (0,0) \ \mbox{ in } \ C(\Sigma_0).
	\]
	Indeed, take $(x_0,t_0) \in \Sigma_0$.
	Since $\Sigma_0$ is open in $\partial\Omega\times(0,T)$, there exist constants $r,\delta > 0$ such that $\left( B_r(x_0) \cap \partial\Omega \right) \times (t_0 - \delta, t_0 + \delta)\subset\Sigma_0$.
	Let us denote by $(\tilde{y},\tilde{z})$ the extension by zero of~$(y,z)$~to~$O := \left(B(x_0;r)\cap\Omega^c\right)\times(t_0 - \delta, t_0 + \delta)$. 
	Then, $(\tilde{y},\tilde{z})\in L^2(0,T;H^2(G)\times H^2(G))$ is a solution to~\eqref{an8} in~$\left( B(x_0;r)\cup \Omega \right) \times (t_0 - \delta, t_0 + \delta)$ and $(\tilde{y},\tilde{z}) = (0,0)$ in $O$.
	Consequently, by Proposition~\ref{thm2}, $(\tilde{y},\tilde{z})$ vanishes in~$C(O)$.
	Since $(x_0,t_0)$ is arbitrary in~$\Sigma_0$, we get the result. \Fin}
\end{rmk}

	Let us now achieve the proof of uniqueness for the geometric inverse problem \eqref{an6}-\eqref{an7}.
	To this end, let $D^0$ and $D^1$ be two open sets in $\mathcal{D}$ and let $(y^i,z^i)$ be the solution to~\eqref{an6} with~$D = D^i$. 
	Let us also assume that
	\[
\left(\frac{\partial y^0}{\partial n},\frac{\partial z^0}{\partial n}\right) = \left(\frac{\partial y^1}{\partial n},\frac{\partial z^1}{\partial n}\right) \,\,\mbox{on}\,\,\gamma\times(0,T).
	\]
	As before, by introducing the open sets $D:=D^0\cup D^1$, $O^0 := \Omega\backslash\overline{D}$ and $O$, with $y := y^0 - y^1$ and $z := z^0 - z^1$, we have
	\[
\left\{
\begin{array}{lll}
	 y_t - \Delta y + ay + bz = 0&\mbox{in}& O\times(0,T),\\
 	z_t - \Delta z + Ay + Bz = 0&\mbox{in}& O\times(0,T),\\ 
	y = 0,\,\, z = 0&\mbox{on}&\partial \Omega\times(0,T) ,\\
	\dfrac{\partial y}{\partial n} = 0,\,\, \dfrac{\partial z}{\partial n} = 0
	&\mbox{on}&\gamma\times(0,T).
\end{array}
\right.
	\]
	From  Remark~\ref{cor:bound}, we find that $(y,z) = (0,0)$ in~$O\times(0,T)$.
	
	We can prove that $D^1\backslash\overline{D}^0$ is the empty set.
	Indeed, suppose the contrary, i.e.~that  $D^1\backslash\overline{D}^0$ is nonempty.
	Let us introduce the open set $D^2 = D^1 \cup ((\Omega\backslash\overline{D^0} \cap (\Omega\backslash\overline{O}))$.
	As before, using the fact that $(y^0,z^0) = (y^1,z^1)$ in~$O\times(0,T)$, we see that
        \begin{equation}
\left\{
\begin{array}{lll}
 y^0_t - \Delta y^0 + ay^0 + bz^0 = 0&\mbox{in}& (D^2\backslash\overline{D}^0)\times(0,T),\\
 z^0_t - \Delta z^0 + Ay^0 + Bz^0 = 0&\mbox{in}& (D^2\backslash\overline{D}^0)\times(0,T),\\
y^0 = z^0 = 0&\mbox{on}&\partial(D^2\backslash\overline{D}^0)\times (0,T),\\
y^0(\cdot\,,0) = z^0(\cdot\,,0) = 0&\mbox{in}&D^2\backslash\overline{D}^0.\label{an10}
\end{array}
\right.
        \end{equation}
	Consequently, thanks to the uniqueness of solution to~\eqref{an10} and Proposition~\ref{thm2}, we must have $(y^0,z^0) = (0,0)$ in~$(\Omega\backslash\overline{D}^0)\times(0,T)$, which implies $(\varphi,\psi) \equiv (0,0)$, an absurd.
	This proves that $D^1\subset D^0$.
	
	Similarly, we can also prove that $D^0\subset D^1$ and, therefore, $D^0 = D^1$.
	
\begin{rmk}{\rm
	If, in~\eqref{an6}, we impose nonzero initial conditions on~$y$ and/or~$z$, the situation is much more complex.
	In particular, the prevuious argument does not work.
	A detailed analysis will be the objective of a forthcoming paper. \Fin}
\end{rmk}

\begin{rmk}{\rm
		Notice that, in  this time-dependent case,  no assumption of the kind \eqref{ha} is needed. \Fin}
\end{rmk}

	Stability results like Theorems~\ref{stab} and~\ref{stab-in} can also be established in this framework.
	We will not give the details for brevity, since the arguments are not very different and can easily be completed by the reader.


\subsection{Additional comments on stability}

	In the context of the stability problem, we can adopt another (more geometrical) viewpoint.
	To clarify the situation, let us consider the scalar systems
        \begin{equation*}\label{simple-sys}
\left\{
\begin{array}{lll}
	-\Delta y^i = 0 &\mbox{in}&\Omega\backslash\overline{D^i},\\ 
	y  = \varphi^i &\mbox{on}&\partial\Omega, \\
	y  = 0 &\mbox{on}&\partial D^i, \\
\end{array}
\right.
        \end{equation*}
where $\Omega$, is as before, $D^0$ and $D^1$ are convex and have nonempty intersection and the following regularity properties hold:
	\[
\varphi^i \in C^2(\partial\Omega), \ \ \tilde\alpha^i := \frac{\partial y^i}{\partial n} \in C^1(\overline{\gamma}), \ \ y^i \in C^2(\overline{\Omega} \setminus D^i).
	\]
	Let us assume that 
	\[
\left\{
\begin{array}{l} \displaystyle
\|\varphi^i\|_{C^0(\partial\Omega)} \geq m > 0, \ \ \|\varphi^0 - \varphi^1\|_{C^2(\partial\Omega)} \leq \epsilon \\
\noalign{\smallskip}
\|\tilde\alpha^0 - \tilde\alpha^1\|_{C^1(\overline{\gamma})} \leq \epsilon, \ \ 
\|y^i\|_{C^2(\overline{\Omega} \setminus D^i)} \leq M .
\end{array}
\right.
	\]
	Then, it can be proved that the Haussdorf distance $d_H(D^0,D^1)$ satisfies the estimate
	\[
d_H(D^0,D^1) \leq  \frac{C}{\left( \log(\log \frac{1}{\epsilon}) \right)^2} ,
	\]
where $C$ only depends on~$\Omega$, $M$ and~$m$;
	see~\cite{BY}.

	The proof is based on the following well-posedness results, where $C$ is as above:
   
\begin{itemize}

\item Let us set again $G = \Omega \setminus \overline{D^0 \cup D^1}$ and let us assume that
   \[
|y^0(\hat x) - y^1(\hat x)| = \max_{x \in \overline{G}} |y^0(x) - y^1(x)|.
   \]
   Then, under the previous hypotheses, one has 
   $
|y^0(\hat x) - y^1(\hat x)| \leq C \left(\log \frac{1}{\epsilon}\right)^{-1}
   $.

\item Assume that $\|y^0 - y^1\|_{C^0(\overline{G})} \leq \delta$. Then
   $
d_H(D^0,D^1) \leq C\left( \log \frac{1}{\delta} \right)^{-2}
   $.

\end{itemize}

	Under additional properties for the $\varphi^i$, $\tilde\alpha^i$ and $y^i$, the previous estimates can be improved; see~\cite{BY} for more details.

	It would be interesting to extend this approach to the inverse problems~\eqref{an}, \eqref{an1} and~\eqref{an}, \eqref{st9}.
	At present, to our knowledge, whether or not this is possible is an open question.

\subsection{Reconstruction}

	As already said, reconstruction algorithms for the solution of problems of the kind~\eqref{an}--\eqref{an1} have been considered in several papers.
	In all them, the main idea is to reduce to finite dimension and reformulate the search of the unknown $D$ as a constrained (maybe numerically ill-conditionned) extremal problem.
	Then, usual gradient, quasi-Newton or even Newton methods can be used to compute approximate solutions;
	see for instance~\cite{Alvarez, Alvarez1, Ben}.
	
	Let us present in this section a different approach that relies on the domain variation techniques introduced in \cite{Murat,Murat1,simon}.
	
	
	The main idea is to describe how the observation depends on small perturbations of $D$ as explicitly as possible.
	Thus, for each $\mu\in\mathcal{W}_{\epsilon}$, let us set
	\[
D + \mu := \left\{ z\in\mathbb{R}^N z : x + \mu(x), \ \ x\in D\right\}
	\]
and let us recall that, whenever $D\in\mathcal{D}$, we also have $D + \mu\in\mathcal{D}$ as indicated in~Figure~2 below.

\begin{figure}[h]
\centering
\includegraphics[scale = 0.4]{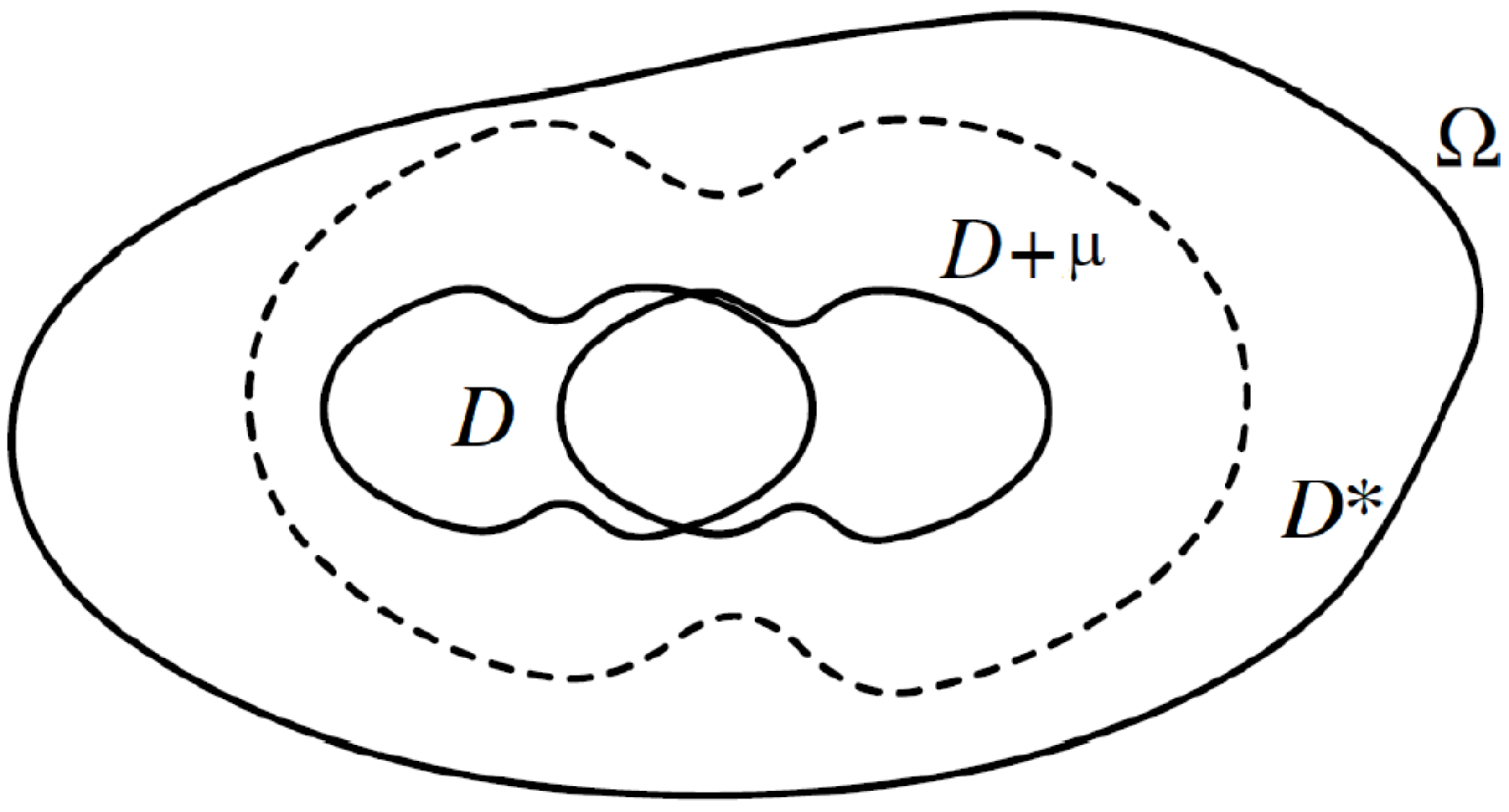}
\caption{The deformations of $D$.}
\end{figure}

	Let us assume that $a,\,b,\,A,\,B\in L^{\infty}(\Omega)$ satisfy \eqref{ha} and
	 we can consider the perturbed system
        \begin{equation*}\label{an11}
\left\{
\begin{array}{lll}
-\Delta y_\mu + ay_\mu + bz_\mu = 0&\mbox{in}&\Omega\backslash(\overline{D+\mu}),\\
-\Delta z_\mu + Ay_\mu + Bz_\mu = 0&\mbox{in}&\Omega\backslash(\overline{D+\mu}),\\ 
y_\mu = \varphi,\,\, z_\mu = \psi &\mbox{on}&\partial\Omega,\\
y_\mu = 0,\,\, z_\mu = 0&\mbox{on}&\partial (D+\mu).
\end{array}
\right.
        \end{equation*}

	Assuming appropriate regularity hypothesis on $D$ and $(\varphi,\psi)$, it can be proved that
        \begin{equation*}\label{an19}
	\left(\frac{\partial y_\mu}{\partial n},\frac{\partial z_\mu}{\partial n}\right) - \left(\frac{\partial y}{\partial n},\frac{\partial z}{\partial n}\right) 
	= \left(\frac{\partial y'_\mu}{\partial n},\frac{\partial z'_\mu}{\partial n}\right) + o(\mu)\quad\mbox{on}\quad \gamma,
        \end{equation*} 
	where $(y'_\mu,z'_\mu)$ is the unique solution to the linear system
        \begin{equation*}
\left\{
\begin{array}{lll}
-\Delta y'_\mu + ay'_\mu + bz'_\mu = 0&\mbox{in}&\Omega\backslash\overline{D},\\
-\Delta z'_\mu + Ay'_\mu + Bz'_\mu = 0&\mbox{in}&\Omega\backslash\overline{D},\\ \label{an15}
y'_\mu = 0,\, z'_\mu = 0 &\mbox{on}&\partial\Omega,\\
y'_\mu = -(\mu\cdot n)\dfrac{\partial y}{\partial n},\, z'_\mu = -(\mu\cdot n)\dfrac{\partial z}{\partial n}&\mbox{on}&\partial D,
\end{array}
\right.
        \end{equation*}
and
	\[
\frac{o(\mu)}{\ \ \,\|\mu\|_{W^{2,\infty}}} \to 0 \ \text{ as } \ \|\mu\|_{W^{2,\infty}} \to 0.
	\]
	Moreover, for any $(\overline{\eta},\overline{\theta})\in C^2(\overline{\gamma})$, one has
        \begin{equation}
	\displaystyle\int_{\gamma}\left[\left(\frac{\partial y_\mu}{\partial n} - \frac{\partial y}{\partial n}\right)\overline{\eta}
	+ \left(\frac{\partial z_\mu}{\partial n} - \frac{\partial z}{\partial n}\right)\overline{\theta}\right]d\Gamma
= -\displaystyle\int_{\partial D}(\mu\cdot n)\left(\frac{\partial y}{\partial n}\frac{\partial\eta}{\partial n} + \frac{\partial z}{\partial n}\frac{\partial\theta}{\partial n}\right)d\Gamma + o(\mu), \label{an17}
        \end{equation}
where $(\eta,\theta)$ is the solution to the adjoint system
        \begin{equation}\label{an18}
\left\{
\begin{array}{lll}
-\Delta \eta + a\eta + A\theta = 0&\mbox{in}& \Omega\backslash\overline{D},\\
\noalign{\smallskip}\dis
-\Delta \theta + b\eta + B\theta = 0& \mbox{in}& \Omega\backslash\overline{D},\\ 
\noalign{\smallskip}\dis
\eta = \overline{\eta}1_{\gamma},\, \theta = \overline{\theta}1_{\gamma} &\mbox{on}&\partial\Omega,\\
\noalign{\smallskip}\dis
\eta = 0,\, \theta = 0& \mbox{on}&\partial D.
\end{array}
\right.
        \end{equation}

	Let us see how, starting from an already computed candidate $\tilde {D}$ to the solution of the geometric inverse problem~\eqref{an}--\eqref{an1}, we can compute a better candidate of the form $\tilde{D} + \mu$.
	
	Let $\mathcal{M}$ be a finite dimensional subspace of $L^\infty(\partial\tilde{D})$ and let $\{f_1,\ldots,f_p\}$ be a basis of $\mathcal{M}$.
	We will take $\mu$ such that $\mu\cdot n|_{\partial\tilde{D}} \in \mathcal{M}$.
	Then, we can write
        \begin{equation*}
\mu\cdot n|_{\partial\tilde{D}} = \displaystyle\sum_{i=1}^{p}\lambda_if_i
        \end{equation*}
for some $\lambda_i\in \mathbb{R}$ to be determined. 

	Now, let us introduce $p$ linearly independent functions $(\overline{\eta}^i,\overline{\theta}^i)\in C^2(\overline{\gamma})$.
	Using \eqref{an17}, we obtain
        \begin{equation*}
\displaystyle\int_{\gamma}\left[\left(\frac{\partial y_\mu}{\partial n} - \tilde{\alpha}\right)\overline{\eta}^{j}1_{\gamma} 
+ \left(\frac{\partial z_\mu}{\partial n} - \tilde{\beta}\right)\overline{\theta}^{j}1_{\gamma}\right]d\Gamma =
-\displaystyle\sum_{i=1}^{p}K_{ij}\lambda_i+ o(\mu),
        \end{equation*}
where 
        \begin{equation*}
K_{ij} := \displaystyle\int_{\partial\tilde{D}}f_i\left(\frac{\partial\tilde{y}}{\partial n}\frac{\partial\eta^{j}}{\partial n} 
+ \frac{\partial\tilde{z}}{\partial n}\frac{\partial\theta^{j}}{\partial n}\right)d\Gamma,
        \end{equation*}
we have denoted by~$(\eta^{j},\theta^{j})$ is the solution to~\eqref{an18} corresponding to~$(\overline{\eta}^{j},\overline{\theta}^{j})$ and~$(\tilde{\alpha}, \tilde{\beta})$ is the observation corresponding to~$(\tilde{y},\tilde{z})$. 
	We thus see that an appropriate strategy to compute the coefficients $\lambda_i$ is to solve, if possible, the finite-dimensional algebraic system
        \begin{equation*}
	\displaystyle\sum_{i=1}^{p}K_{ij}\lambda_i = -\displaystyle\int_{\gamma}\left[(\alpha - \tilde{\alpha})\overline{\eta}^{j}+ (\beta - \tilde{\beta})\overline{\theta}^{j}\right]d\Gamma,
	\quad1\leq j\leq p.
        \end{equation*}

	A rigorous justification of the main steps presented before, together with a detailed analysis of the reconstruction method, will be the subject of a forthcoming work.
	
\

\textbf{Acknowledgments:}
	This work has been partially done while the first author was visiting the University of Sevilla (Spain).
	He wishes to thank the members of the IMUS (Institute of Mathematics of the University of Sevilla) for their kind hospitality.


\begin{thebibliography}{99}

%
%
%

\bibitem{Alvarez} C. Alvarez, C. Conca, L. Friz, O. Kavian, Identification of immersed obstacles via boundary measurements, {\it Inverse Problems}, {\bf 21} (2005), 1531-1552.

\bibitem{Alvarez1} C.Alvarez, C. Conca, R. Lecaros and J.H. Ortega, On the identification of a rigid body immersed in a fluid: A numerical approach, {\it Engineering Analysis with Boundary Elements}, {\bf 32} (2008), 919-925.
%
%

\bibitem{Ben} A. Ben Abda, M. Hassine,M. Jaoua and M. Masmoudi, Topological sensitivity analysis for the location of small cavities in Stokes flow, {\it SIAM J. Control Optim}, {\bf 48} (2009), 2871-2900.
%

\bibitem{BY} A.L: Buckgeim, J: Cheng, M: Yamamoto, Stability for an inverse boundary problem of determining a part of a boundary. Inverse Problems, 1999, 15(4):1021-1032.


\bibitem{Doubova2} A. Doubova and E. Fern\'andez-Cara, Some geometric inverse problems for the linear wave equations, {\it Inverse Problems and Imaging}, {\bf 9} (2015), 371-393.

\bibitem{Doubova3} A. Doubova and E. Fern\'andez-Cara, Some geometric inverse problems for the Lam\'e system with applications in elastography, {\it Appl. Math. Optim.}, {\bf 76} (2018), 1-21.

\bibitem{Doubova} A. Doubova, E. Fern\'andez-Cara, M. Gonz\'alez-Burgos and J.H. Ortega, A geometric inverse problem for the Boussinesq system, {\it Discret. Contin. Dyn. Syst. Ser. B}, {\bf 6} (2006), 1213-1238.

\bibitem{Doubova1} A. Doubova, E. Fern\'andez-Cara and J.H. Ortega, On the identification of a single body immersed in a Navier-Stokes fluid, {\it Eur. J. Appl. Math.}, {\bf 18} (2007), 57-80.

\bibitem{Fabre} C. Fabre, Uniqueness results for Stokes equations and their consequences in linear and nonlinear control problems, {\it ESAIM: Control, Optim. and Calc. of Variat.}, {\bf 1} (1996), 267-302.

\bibitem{henrot} A. Henrot and Michel Pierre. Shape Variation and Optimization, {\it EMS Tracts in Mathematics}, {\bf 28} (2018). 

\bibitem{Isakov} V. Isakov, {\it Inverse Problems for Partial Differential Equations}, Springer, New York, 2006.

\bibitem{Kavian} O. Kavian, Lectures on parameter identification, {\it Three Courses in Partial Differential Equations}, IRMA Lect. Math. Theor. Phys., 4, de Gruyter, Berlin, 2003, 125-162.
%

\bibitem{Murat} F. Murat and J. Simon, Quelques r\'esultats sur le contr\^ole par um domaine g\'eom\'etrique, {\it Rapport du L.A.}, 189 no. 74003, Universit\'e Paris VI, 1974.

\bibitem{Murat1} F. Murat and J. Simon, Sur le contr\^ole par um domaine g\'eometrique, {\it Rapport du L.A.}, 189 no. 76015, Universit\'e Paris VI, 1976.

\bibitem{simon} J. Simon, Differentiation with respect to the domain in boundary value problems, {\it Numer. Func. Anal. Optim.}, {\bf 2} (1980), 649-687.



\end{thebibliography}
\end{document}